\documentclass[12pt]{amsart}
\usepackage[T1]{fontenc}
\usepackage[utf8]{inputenc}
\usepackage{lmodern}
\usepackage{microtype}
\usepackage[margin=27mm,headheight=15pt]{geometry}
\usepackage{amsmath,amssymb,amsthm,mathtools}
\usepackage{enumitem}
\usepackage{booktabs,array}
\usepackage{fancyhdr}
\usepackage[hidelinks,pdfencoding=auto,psdextra]{hyperref}
\usepackage{bookmark}
\usepackage{tikz}
\usetikzlibrary{arrows.meta,calc,decorations.pathreplacing}
\usepackage[font=small,labelfont=bf]{caption}
\definecolor{curveblue}{RGB}{34,76,112}
\definecolor{orbitred}{RGB}{155,58,46}
\setlist[enumerate]{itemsep=3pt,topsep=5pt}
\setlist[itemize]{itemsep=3pt,topsep=5pt}
\numberwithin{equation}{section}
\newtheorem{theorem}{Theorem}[section]
\newtheorem{proposition}[theorem]{Proposition}
\newtheorem{lemma}[theorem]{Lemma}
\newtheorem{corollary}[theorem]{Corollary}
\theoremstyle{definition}
\newtheorem{definition}[theorem]{Definition}
\newtheorem{hypothesis}[theorem]{Theorem}
\theoremstyle{remark}
\newtheorem{remark}[theorem]{Remark}
\newcommand{\C}{\mathbb C}
\newcommand{\R}{\mathbb R}
\newcommand{\Z}{\mathbb Z}

\newcommand{\D}{\mathbb D}

\newcommand{\HH}{\mathbb H}
\newcommand{\Chat}{\widehat{\mathbb C}}
\newcommand{\id}{\operatorname{id}}
\newcommand{\Int}{\operatorname{int}}

\newcommand{\dist}{\operatorname{dist}}

\newcommand{\Var}{\operatorname{Var}}

\newcommand{\im}{\operatorname{Im}}
\newcommand{\re}{\operatorname{Re}}

\newcommand{\Mf}{\mathcal M_f}

\newcommand{\norm}[1]{\left\lVert #1\right\rVert}

\newcommand{\dd}{\,d}
\newcommand{\compactin}{\Subset}
\newcommand{\PM}{P\'erez-Marco}
\title{Unique ergodicity of hedgehogs}

\subjclass[2020]{Primary 37F50; Secondary 37A25, 37B20.}
\keywords{hedgehog, Siegel compactum, unique ergodicity, irrational indifferent fixed point, quasi-invariant curves}

\author{Ricardo P\'erez-Marco}
\address{CNRS, Institut de Math\'ematiques de Jussieu--Paris Rive Gauche,
Universit\'e Paris Cit\'e, Paris, France}

\date{22 September 2026}

\begin{document}
\begin{abstract}
We prove that the dynamics on a non-linearizable hedgehog is uniquely
ergodic. This solves a conjecture of the author formulated in 1995.
Its unique invariant probability measure is the Dirac mass at
the indifferent fixed point. The proof
relies on the techniques of quasi-invariant curves and the
hyperbolic form of the Denjoy-Yoccoz lemma developed by the author.
\end{abstract}
\maketitle
\tableofcontents
\newpage

\section{Introduction}\label{sec:statement}

Let
\[
 f(z)=e^{2\pi i\theta}z+O(z^2),\qquad \theta\in\R\setminus\mathbb Q,
\]
be a germ of holomorphic diffeomorphism at $0$. The author proved in
\cite{PM97} the existence of nontrivial
full compact connected sets $K\subset\C$ containing $0$ in any neighborhood of $0$.
We assume that
$f$ is holomorphic and one-to-one on an open neighborhood $U$ of $K$, with
\begin{equation}\label{eq:setting}
 f(0)=0,\qquad f'(0)=e^{2\pi i\theta},\qquad
 f(K)=K=f^{-1}(K).
\end{equation}
Here $f^{-1}$ is the inverse branch on $f(U)$. Recall that $K$ is
\emph{full} when $\C\setminus K$ is connected. We work with Siegel
compacta for which the external-circle correspondence of
Theorem~\ref{input:external} holds. In particular, this includes the
compacta constructed in \cite{PM97}. In the non-linearizable case they
are called \emph{hedgehogs}. They are topologically and metrically complex
invariant compacta (see \cite{PM97}, \cite{Biswas05} and \cite{Biswas16} for example). For a survey of the state of the art of the dynamics on hedgehogs we refer to \cite{Biswas09}.

Both $f$ and its inverse are defined in a Jordan neighborhood of the compactum.

\begin{definition}
A Borel probability measure $\mu$ on $K$ is $f$-invariant if
$f_*\mu=\mu$. Write $\Mf(K)$ for the set of such measures. The map
$f|_K$ is \emph{uniquely ergodic} if $\Mf(K)$ contains exactly one
measure.
\end{definition}

\begin{theorem}[Unique ergodicity of hedgehogs]\label{thm:main}
Under the preceding assumptions, if the germ of $f$ at $0$ is not
holomorphically linearizable, then
\begin{equation}\label{eq:unique}
 \Mf(K)=\{\delta_0\}.
\end{equation}
\end{theorem}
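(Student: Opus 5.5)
Since $\delta_0$ is clearly invariant, it suffices to show that every ergodic $\mu\in\Mf(K)$ satisfies $\mu(K\setminus\{0\})=0$; uniqueness then follows from the ergodic decomposition. I will argue by contradiction. Suppose $\mu$ is ergodic, $f$-invariant, and $\mu\neq\delta_0$. Then $\mu(\{0\})=0$, and by Poincar\'e recurrence $\mu$-almost every point $z\in K\setminus\{0\}$ is recurrent. The aim is to show that a recurrent point of $K\setminus\{0\}$ carrying positive mass in every neighborhood is incompatible with non-linearizability. The underlying principle is that orbits on a hedgehog accumulate on $0$ (\PM), so invariant mass cannot stay away from $0$.

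The tool is the quasi-invariant curves of the external-circle correspondence (Theorem~\ref{input:external}). Conjugating the exterior $\C\setminus K$ to the exterior of a disk, $f$ induces an analytic circle diffeomorphism $g$ of rotation number $\theta$. For any denominator $q_n$ of the continued fraction of $\theta$, there is a Jordan curve $\gamma$ close to $K$ such that $f^{q_n}(\gamma)$ stays within controlled hyperbolic distance of $\gamma$. The hyperbolic Denjoy--Yoccoz lemma then yields uniform bounds: $f^{q_n}$ is $\varepsilon_n$-close to the identity in the hyperbolic metric of $\C\setminus K$ near $\gamma$, with $\varepsilon_n\to0$ along a subsequence. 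Passing to $K$, I would deduce that the iterates $f^{q_n}|_K$ converge uniformly to $\id_K$ along a subsequence. This is the known fact that hedgehog dynamics is recurrent. That fact alone is consistent with many invariant measures, though, so more is needed.

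The key step is to use non-linearizability quantitatively. If $\mu(K\setminus\{0\})>0$, then for some $r>0$ the set $A=\{z\in K:|z|\ge r\}$ has positive mass. Using $|f^{q_n}-\id|\to0$ on $K$ together with invariance, I would construct a holomorphic limit: the averaged maps $h_N=\frac1N\sum_{j<N}(f^{j})$ on a neighborhood of $0$. Alternatively, one can use the Koenigs-type linearization attempt $\phi_N=\frac1N\sum_{j<N}e^{-2\pi i j\theta}f^j$. These are controlled uniformly on a domain bounded by a quasi-invariant curve, by the maximum principle, and via Denjoy--Yoccoz. A subsequential limit gives a holomorphic $\phi$ with $\phi\circ f=e^{2\pi i\theta}\phi$ and $\phi'(0)=1$. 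This would linearize $f$, a contradiction, unless the uniform bounds fail. The measure $\mu$ enters here: its mass on $A$, combined with invariance and Birkhoff's theorem, should force the ergodic averages of $e^{-2\pi i j\theta}f^j$ at $\mu$-typical points to be bounded away from $0$ and nondegenerate. Those averages supply the normalization and the lower bound needed for a nonconstant limit.

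The main obstacle is this last step: converting positive mass away from $0$ into uniform control of the iterates on a full neighborhood of $0$, and not only on $K$. First I would try an eigenfunction argument. The Koopman operator of $f$ on $L^2(\mu)$ restricted to the function $z$ should give an eigenfunction for $e^{2\pi i\theta}$ via von Neumann's ergodic theorem, which is nonzero because $\mu$ lives off $0$. I would then extend it holomorphically to the interior of quasi-invariant curves using the hyperbolic Denjoy--Yoccoz estimates, in order to produce the forbidden linearization.
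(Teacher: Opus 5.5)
Your proposal has a genuine gap at the step you yourself call the main obstacle, and the mechanism you propose there cannot work as stated.

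\emph{The eigenfunction need not be nonzero.} The von Neumann limit of $\frac1N\sum_{j<N}e^{-2\pi ij\theta}\,(z\circ f^j)$ in $L^2(\mu)$ is the orthogonal projection of the coordinate function onto the $e^{2\pi i\theta}$-eigenspace of the Koopman operator of $\mu$. It vanishes whenever $e^{2\pi i\theta}$ is not an eigenvalue, for instance if $\mu$ were weakly mixing. Neither $\mu(\{0\})=0$ nor $\mu(\{|z|\ge r\})>0$ has any bearing on this, and Birkhoff's theorem gives no lower bound on twisted averages. So ``nonzero because $\mu$ lives off $0$'' is a non sequitur.

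\emph{There is no fixed domain on which to take a limit.} Even a nonzero eigenfunction is only an $L^2(\mu)$ class on $\operatorname{supp}\mu\subset K$, and it carries no holomorphic information near $0$. The twisted averages $\phi_N$ are defined, and bounded by the maximum principle, only on $\Omega_n$ for $N\le Q_n$. These domains shrink to $K$, which in the non-linearizable case has empty interior. Hence no fixed neighborhood of $0$ is available on which to extract a normal-family limit $\phi$ with $\phi'(0)=1$. Uniform control of all $\phi_N$ on such a neighborhood is essentially equivalent to linearizability, so the argument is circular.

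A smaller point: the Denjoy--Yoccoz estimate gives a bounded hyperbolic displacement ($D$ or $D+L$), not one tending to $0$. Smallness of $f^{q_n}-\id$ appears only in the Euclidean metric, via $(\im w)|\Psi'(w)|\to0$, which comes from the finite area of the shrinking collars.

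The missing idea is to apply the quasi-invariant curves to \emph{untwisted} Birkhoff averages of polynomials, not to a linearization.
\begin{itemize}
\item Any two orbit segments of length $Q_n=q_{n+1}$ starting on $\Gamma_n$ agree up to a permutation of indices within $\varepsilon_n\to0$. This follows from Lemma~\ref{lem:rotation-match}, the tube estimate, and the metric conversion.
\item Hence the holomorphic function $A_{Q_n}P$ is nearly constant on $\Gamma_n$. By the maximum principle it is uniformly close on $K$ to its value at $0$, which is $P(0)$ (Proposition~\ref{prop:criterion}).
\item Integrating against any invariant $\mu$ gives $\int P\,d\mu=P(0)$ for every polynomial $P$. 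No ergodic decomposition or eigenfunction is needed.
\end{itemize}
Non-linearizability enters only through $\Int K=\varnothing$ (Proposition~\ref{prop:empty}). An interior component would be invariant and would contain a fixed point by uniform recurrence. The average of $P(z)=z$ forces that fixed point to be $0$, and Schwarz's lemma would then linearize $f$. Once the interior is empty, Mergelyan's theorem makes polynomials dense in $C(K)$, which forces $\mu=\delta_0$.
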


When we have unique ergodicity the Birkhoff sums converge uniformly.

\begin{corollary}
For every $\varphi\in C(K)$,
\begin{equation}\label{eq:uniform-main}
 \lim_{N\to\infty}\sup_{z\in K}
 \left|\frac1N\sum_{j=0}^{N-1}\varphi(f^j(z))-\varphi(0)\right|=0.
\end{equation}
\end{corollary}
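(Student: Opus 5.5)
The corollary follows from Theorem~\ref{thm:main} by the standard argument that unique ergodicity of a continuous map on a compact metric space forces uniform convergence of Birkhoff averages. We argue by contradiction. Suppose that for some $\varphi\in C(K)$ the limit in \eqref{eq:uniform-main} fails. Then there exist $\varepsilon>0$, integers $N_k\to\infty$ and points $z_k\in K$ such that
\[
 \Bigl|\frac1{N_k}\sum_{j=0}^{N_k-1}\varphi(f^j(z_k))-\varphi(0)\Bigr|\ge\varepsilon .
\]

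Consider the empirical probability measures $\mu_k=\frac1{N_k}\sum_{j=0}^{N_k-1}\delta_{f^j(z_k)}$ on $K$. The points $f^j(z_k)$ lie in $K$ because $f(K)=K$ by \eqref{eq:setting}. Since $K$ is compact, the space of Borel probability measures on $K$ is weak-$*$ compact, so after passing to a subsequence we may assume $\mu_k\to\mu$ weak-$*$.

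Next we check that $\mu$ is invariant. Since $f$ is continuous on $K$, for every $\psi\in C(K)$ the sum telescopes:
\[
 \Bigl|\int\psi\circ f\,d\mu_k-\int\psi\,d\mu_k\Bigr|
 =\frac1{N_k}\bigl|\psi(f^{N_k}(z_k))-\psi(z_k)\bigr|\le\frac{2\norm{\psi}_\infty}{N_k}\to0 .
\]
Because $\psi\circ f\in C(K)$, passing to the limit gives $\int\psi\circ f\,d\mu=\int\psi\,d\mu$ for all $\psi$, that is, $f_*\mu=\mu$. Hence $\mu\in\Mf(K)$.

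By Theorem~\ref{thm:main}, $\mu=\delta_0$. Weak-$*$ convergence along the subsequence then gives $\int\varphi\,d\mu_k\to\varphi(0)$. But $\int\varphi\,d\mu_k$ is exactly the Birkhoff average above, which stays at distance at least $\varepsilon$ from $\varphi(0)$. This is a contradiction, and the corollary follows.

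There is no real obstacle here: all the substance lies in Theorem~\ref{thm:main}, and the only points to check are that $f|_K$ is a continuous self-map of the compact set $K$ and that $\psi\circ f$ is continuous, both of which follow from \eqref{eq:setting}.
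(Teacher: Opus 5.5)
Your proof is correct, but it takes a different route from the paper's.

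You use the standard Oxtoby argument. Limit points of empirical measures are invariant, Theorem~\ref{thm:main} forces them to equal $\delta_0$, and a contradiction follows. The paper never passes through invariant measures for this corollary. It has already proved the uniform statement $\|A_{Q_n}\varphi-\varphi(0)\|_K\to0$ along the denominators $Q_n=q_{n+1}$ (Proposition~\ref{prop:continuous-subsequence}), using polynomial averages, the maximum principle and Mergelyan's theorem. It then applies the blocking Lemma~\ref{lem:blocking}. An orbit segment of length $N=mQ+r$ is cut into $m$ blocks of one fixed good length $Q=Q_k$, and each block starts at a point of $K$. This gives $\sup_K|A_N\varphi-\varphi(0)|\le\epsilon+\frac{Q}{N}(\|\varphi\|_\infty+|\varphi(0)|)$.

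In the paper, unique ergodicity is itself deduced from that same subsequence statement. So the paper's route goes directly from Proposition~\ref{prop:continuous-subsequence} to the corollary and uses no weak-$*$ compactness. It is also quantitative: combined with the estimates of Section~\ref{sec:audit}, it gives an explicit modulus of convergence once a good $Q$ is fixed.

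Your argument is softer and more general. It uses only the conclusion $\Mf(K)=\{\delta_0\}$ together with compactness of $K$ and continuity of $f|_K$. It therefore applies to any uniquely ergodic continuous self-map of a compact metric space, and it is exactly the principle behind the introduction's remark that unique ergodicity gives uniform convergence. The cost is that it gives no rate, and it goes through Theorem~\ref{thm:main}, which in the paper already rests on the uniform subsequence convergence that makes the direct blocking argument available.

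One minor technical point: extracting a convergent subsequence, rather than a subnet, uses the metrizability of the weak-$*$ topology on probability measures on the compact metric space $K$. This holds because $C(K)$ is separable.
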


Also the unique limit measures of atomic measures along orbits in $K$ is $\delta_0$.

\begin{corollary}
For every $r>0$,
\begin{equation}\label{eq:excursion-main}
 \lim_{N\to\infty}\sup_{z\in K}\frac1N
 \#\{0\le j<N:|f^j(z)|\ge r\}=0.
\end{equation}
\end{corollary}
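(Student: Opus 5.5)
The corollary follows from Theorem~\ref{thm:main} by the standard Oxtoby argument for uniquely ergodic systems; the only extra care needed is that the indicator of $\{|z|\ge r\}$ is not continuous. The plan is to first derive \eqref{eq:uniform-main}, and then obtain \eqref{eq:excursion-main} by dominating the indicator with a continuous function that vanishes at $0$.

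\textbf{Step 1 (uniform Birkhoff convergence).} Suppose \eqref{eq:uniform-main} fails for some $\varphi\in C(K)$. Then there are $\varepsilon>0$, integers $N_k\to\infty$ and points $z_k\in K$ such that
\[
\Bigl|\frac1{N_k}\sum_{j<N_k}\varphi(f^j z_k)-\varphi(0)\Bigr|\ge\varepsilon .
\]
Consider the empirical measures $\mu_k=\frac1{N_k}\sum_{j<N_k}\delta_{f^j z_k}$. These are probability measures on the compact set $K$, so by weak-$*$ compactness a subsequence converges to some probability measure $\mu$ on $K$. We have $f_*\mu_k-\mu_k=\frac1{N_k}(\delta_{f^{N_k}z_k}-\delta_{z_k})$, whose total mass is at most $2/N_k\to0$. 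Since $f|_K$ is continuous, $f_*$ is weak-$*$ continuous, and therefore $f_*\mu=\mu$, i.e.\ $\mu\in\Mf(K)$. Theorem~\ref{thm:main} then gives $\mu=\delta_0$, so $\int\varphi\,d\mu_k\to\varphi(0)$ along the subsequence. This contradicts the choice of $\varepsilon$.

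\textbf{Step 2 (excursions).} Fix $r>0$ and set $\varphi(z)=\min(1,|z|/r)$. This function is continuous on $K$, satisfies $\varphi(0)=0$, and $\varphi\ge \mathbf 1_{\{|z|\ge r\}}$. Hence
\[
\frac1N\#\{0\le j<N:|f^j z|\ge r\}\le \frac1N\sum_{j<N}\varphi(f^j z)
\]
for every $z\in K$. By \eqref{eq:uniform-main}, the right-hand side tends to $\varphi(0)=0$ uniformly in $z\in K$, which proves \eqref{eq:excursion-main}.

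There is no real obstacle. The only point to check is the invariance of the weak-$*$ limit, and this uses only the continuity of $f$ on $K$, which holds because $f(K)=K$ and $f$ is holomorphic near $K$.
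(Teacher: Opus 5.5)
Your proof is correct. Step 2 is essentially the paper's argument, but Step 1 takes a different route.

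You derive \eqref{eq:uniform-main} from Theorem~\ref{thm:main} by the abstract Oxtoby argument: empirical measures, weak-$*$ compactness, invariance of limit points, and uniqueness of the invariant measure. The paper does not pass through measures at this point. It starts from the uniform convergence $\norm{A_{Q_n}\varphi-\varphi(0)}_K\to0$ along the denominators, given by Proposition~\ref{prop:continuous-subsequence}. It then extends this to all averaging times by the elementary uniform blocking Lemma~\ref{lem:blocking}, which cuts an orbit segment of length $N$ into blocks of one fixed good length $Q$. The resulting error is at most $\epsilon+Q(\norm{\varphi}_\infty+|\varphi(0)|)/N$.

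What each route buys:
\begin{itemize}
\item Your route uses the theorem purely as a black box. It works verbatim for any uniquely ergodic continuous self-map of a compact metric space.
\item The paper's route needs no compactness argument and no uniqueness statement, only uniform convergence along some sequence of lengths. It also carries the explicit modulus of \eqref{eq:approx-error} over to every averaging time.
\end{itemize}

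In Step 2, both arguments dominate the indicator of $\{|z|\ge r\}$ by a nonnegative continuous function vanishing at $0$. The paper uses $|z|^2/r^2$ (a Chebyshev bound applied to \eqref{eq:second-moment}), and you use $\min(1,|z|/r)$. Either choice works.
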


The non-linearizability assumption is necessary. For the rotation
$f(z)=e^{2\pi i\theta}z$ on $K=\overline\D_R$, angular measure on every
circle $|z|=r$, $0<r\le R$, is invariant, as is $\delta_0$.

\bigskip
\textbf{Background on the problem.}

\bigskip

The conjecture was originally formulated by the author at a
Workshop on Dynamical Systems in Bologna, Italy, in 1995.

\medskip

A first result on the unique ergodicity of hedghogs was obtained
in the study of the class
of germs of quadratic type, for which a full model of the hedgehog was constructed \cite{PM95bis}.
This class of holomorphic germs does not cover  the general case.
Avila and Cheraghi \cite{AC} proved the
unique ergodicity for hedgehogs in the class of high type quadratic polynomials.
This is a very special class of quadratic polynomials, very well approximated by parabolic polynomials. This forces a heavy restriction on the rotation numbers (for example they are of zero measure). These two approaches rely  on cumbersome renormalization machinery.

\bigskip

In our present article, no arithmetic restriction is imposed and the result is valid in general for any holomorphic germ without any restriction on the rotation number $\theta$ .

\medskip

The main technical ingredients of the proof are the use of osculating quasi-invariant curves and the
hyperbolic Denjoy-Yoccoz lemma proved by the author in
\cite{PMQ,PMBB}. Quasi-invariant curves have been exploited to prove important properties
of the dynamics on hedgehogs and to resolve long standing problems which include
questions by Fatou, Dulac, Briot and Bouquet \cite{PM95,PMBB}. Initially quasi-invariant
curves were constructed using heavy renormalization machinery, developed
originally in \cite{PM96}, and later simplified in \cite{PMQ}.

\medskip

In this article we use a small height version of the hyperbolic Denjoy-Yoccoz lemma, with constants depending
on the external circle map. The idea is that Birkhoff sums are almost constant on the
quasi-invariant curves.
The Birkhoff sums associated to a continuous function $\varphi$ on $K$ are evaluated
by approximating $\varphi$ by polynomials using Mergelyan's Theorem.
The maximum principle controls the Birkhoff sums inside the
hedgehog and determines their limit from the fixed point value. The technique of using the
maximum principle to infer results on the topologically complex hedgehog is the fundamental technique used since 1995 \cite{PM95}. It seems to be the only procedure to obtain the rigidity of the dynamics on hedgehogs, without touching to the complex structure of hedgehogs.

We first prove convergence for polynomial functions, without using empty
interior. The same curves give uniform recurrence. These two facts imply
that a non-linearizable compactum has empty interior, and polynomial
approximation completes the argument. Besides elementary complex
analysis, we use the external-circle construction \cite{PM97}, Denjoy's
topological conjugacy theorem \cite{Denjoy}, and the
Mergelyan approximation theorem \cite{Mergelyan51}. The required
real and complex estimates are proved below. In particular, the bounded
return-derivative estimate of Section~\ref{sec:dk} is sufficient; its
convergence to $1$ is not needed.

Most of the techniques come from \cite{PMQ,PMBB} but we have reproved the results
here from scratch for completeness,
to keep direct track of the constant dependency and to make the article self-contained.

\bigskip

\textbf{Disclaimer.} A final step of the proof was found playing around with
ChatGPT Astra exloring to  estimate Birkhoff sums using quasi-invariant curves.
A suggestion generated in this interaction was to use Mergelyan polynomial
approximation to reduce the evaluation of Birkhoff sums to the polynomial case.
The remaining techniques and ideas are due
to the author. AI was also used for  LaTeX assistance and the TikZ figures.
The author is solely responsible for the mathematical content and correctness of the article.

\bigskip

\newpage

\section{Holomorphic Birkhoff sums}

For a positive integer $N$ and a continuous map $\varphi$, put
\begin{equation}\label{eq:birkhoff-def}
 A_N\varphi(z)=\frac1N\sum_{j=0}^{N-1}\varphi(f^j(z)).
\end{equation}
We begin with the maximum-principle argument that will be used at the
end of the construction.

\begin{proposition}[Boundary matching criterion]\label{prop:criterion}
Suppose that $\Omega_n$ are bounded Jordan domains containing $K$, with
boundaries $\Gamma_n$, and that $Q_n\to\infty$. Assume:
\begin{enumerate}[label=\textnormal{(\alph*)}]
\item every $f^j$, $0\le j<Q_n$, is holomorphic on a neighborhood of
      $\overline{\Omega_n}$;
\item all points $f^j(\zeta)$ with $\zeta\in\Gamma_n$ and $0\le j<Q_n$
      lie in one fixed disk;
\item there are $\varepsilon_n\to0$ such that, for any
      $\zeta,\xi\in\Gamma_n$, some permutation $\sigma$ of
      $\{0,\ldots,Q_n-1\}$ satisfies
      \begin{equation}\label{eq:abstract-matching}
       \max_{0\le j<Q_n}|f^j(\zeta)-f^{\sigma(j)}(\xi)|
       \le\varepsilon_n.
      \end{equation}
\end{enumerate}
Then for every polynomial $P$,
\begin{equation}\label{eq:poly-criterion}
 \norm{A_{Q_n}P-P(0)}_K\longrightarrow0.
\end{equation}
\end{proposition}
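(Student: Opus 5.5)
Fix a polynomial $P$ and set $g_n=A_{Q_n}P-P(0)$. By (a), $g_n$ is holomorphic on a neighborhood of $\overline{\Omega_n}$. Since $K\subset\Omega_n$, the maximum principle gives $\norm{g_n}_K\le\max_{\Gamma_n}|g_n|$, so it suffices to show $\max_{\Gamma_n}|g_n|\to0$. I split this into two steps: first show that $A_{Q_n}P$ is almost constant on $\Gamma_n$, then identify that constant with $P(0)$.

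\textbf{Step 1 (oscillation on $\Gamma_n$).} By (b), all points $f^j(\zeta)$ with $\zeta\in\Gamma_n$, $0\le j<Q_n$, lie in a fixed disk $D$. Let $L$ be the Lipschitz constant of $P$ on $D$; it does not depend on $n$. Take $\zeta,\xi\in\Gamma_n$ and the permutation $\sigma$ given by (c). Reindexing the second sum by $\sigma$,
\[
 |A_{Q_n}P(\zeta)-A_{Q_n}P(\xi)|\le\frac1{Q_n}\sum_{j}|P(f^j(\zeta))-P(f^{\sigma(j)}(\xi))|\le L\varepsilon_n .
\]
Hence $\operatorname{osc}_{\Gamma_n}A_{Q_n}P\le L\varepsilon_n\to0$.

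\textbf{Step 2 (identifying the constant).} Fix $\zeta_n\in\Gamma_n$ and put $c_n=A_{Q_n}P(\zeta_n)$. Consider $h_n=A_{Q_n}P-c_n$. It is holomorphic on $\overline{\Omega_n}$ and satisfies $|h_n|\le L\varepsilon_n$ on $\Gamma_n$. By the maximum principle, $|h_n|\le L\varepsilon_n$ on all of $\Omega_n$. Now $0\in K\subset\Omega_n$ and $f^j(0)=0$, so $A_{Q_n}P(0)=P(0)$. Therefore $|P(0)-c_n|\le L\varepsilon_n$.

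\textbf{Conclusion.} Combining the two steps, $|A_{Q_n}P-P(0)|\le 2L\varepsilon_n$ on $\overline{\Omega_n}\supset K$, which gives \eqref{eq:poly-criterion}.

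The argument is routine. The only point needing care is the Lipschitz bound in Step 1. The points $f^{\sigma(j)}(\xi)$ and $f^j(\zeta)$ both lie in $D$ by (b), but the segment joining them might leave $D$. To avoid this, take $L$ to be the Lipschitz constant of $P$ on the closed convex hull of $D$; a disk is already convex, so this is just a bounded set independent of $n$. Note also that the hypothesis $Q_n\to\infty$ is not used in this argument. It matters only for later applications of the criterion.
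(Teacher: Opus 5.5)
Your proof is correct and follows the paper's argument essentially line by line. The Lipschitz bound on the convex disk from (b), combined with the permutation, gives oscillation at most $L\varepsilon_n$ on $\Gamma_n$; the maximum principle applied to $A_{Q_n}P-c_n$, together with $A_{Q_n}P(0)=P(0)$, then yields the bound $2L\varepsilon_n$ on $K$, exactly as in the paper.
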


\begin{proof}
Choose a disk containing the points in (b), and set
$L_P=\max|P'|$ on that disk. Its convexity gives
$|P(u)-P(v)|\le L_P|u-v|$ there. For $\zeta,\xi\in\Gamma_n$, use the
permutation from (c). Since a permutation preserves a finite sum,
\begin{align*}
 |A_{Q_n}P(\zeta)-A_{Q_n}P(\xi)|
 &\le\frac1{Q_n}\sum_{j=0}^{Q_n-1}
 |P(f^j(\zeta))-P(f^{\sigma(j)}(\xi))|\\
 &\le L_P\varepsilon_n.
\end{align*}
Fix $\zeta_n\in\Gamma_n$ and write $b_n=A_{Q_n}P(\zeta_n)$. The maximum
principle gives
\[
 \norm{A_{Q_n}P-b_n}_{\overline{\Omega_n}}\le L_P\varepsilon_n.
\]
Because $0\in\Omega_n$ and $f(0)=0$, we have $A_{Q_n}P(0)=P(0)$.
Thus $|b_n-P(0)|\le L_P\varepsilon_n$, and
\begin{equation}\label{eq:poly-two}
 \norm{A_{Q_n}P-P(0)}_K\le2L_P\varepsilon_n.
\end{equation}
\end{proof}

\section{External coordinates}\label{sec:external}

\begin{hypothesis}[External-circle correspondence, \PM\ \cite{PM97}]\label{input:external}
For the Siegel compactum $K$, let
\[
 h:\Chat\setminus\overline\D\longrightarrow\Chat\setminus K,
 \qquad h(\infty)=\infty,
\]
be the normalized exterior Riemann map. The map induced by $f$ near the
circle in these coordinates extends analytically across the circle and
restricts to an orientation-preserving analytic circle diffeomorphism.
Its rotation number is irrational; with a consistently oriented
parametrization it agrees with the fixed-point rotation number.
\end{hypothesis}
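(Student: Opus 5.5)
The plan is to build the circle map directly from the Riemann map $h$ by Schwarz reflection, and then to prove separately that its rotation number is irrational and equals $\theta$. Since $f$ is univalent on $U\supset K$ with $f(K)=K=f^{-1}(K)$, there is $\delta>0$ such that $g=h^{-1}\circ f\circ h$ is defined and univalent on the annulus $A_\delta=\{1<|w|<1+\delta\}$, with values in $\{|w|>1\}$. Because $f$ is a homeomorphism near $K$ preserving $K$, $\dist(f(z),K)\to0$ as $\dist(z,K)\to0$. Hence $|g(w)|\to1$ as $|w|\to1^+$, and the same holds for $g^{-1}$.

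The Schwarz reflection principle, applied to $w\mapsto \log|g(w)|$, then extends $g$ analytically to the symmetric annulus $\{(1+\delta)^{-1}<|w|<1+\delta\}$ by $g(w)=1/\overline{g(1/\bar w)}$. This extension maps $\partial\D$ into itself. Doing the same for $g^{-1}$ shows that $g|_{\partial\D}$ is an analytic diffeomorphism. For orientation, note that $g$ maps the exterior side of the circle to the exterior side. It also has degree one: for a Jordan curve $\gamma=h(\{|w|=r\})$ close to $K$, the argument principle gives that $f(\gamma)$ winds once around $0$, because $f$ has the single simple zero $0$ inside $\gamma$. Hence $g|_{\partial\D}$ is orientation preserving.

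For irrationality, suppose $\rho(g)=p/q$. Then $g^q$ has a fixed point $w_0\in\partial\D$, and $(g^q)'(w_0)$ is real and positive, since the map preserves the circle, its orientation and its exterior side. Replacing $g$ by $g^{-1}$ (that is, $f$ by $f^{-1}$) if necessary, we may assume $w_0$ repels along the circle on at least one side; this also covers the parabolic case $(g^q)'(w_0)=1$. Using the reflected dynamics near $w_0$, I would produce a $g^q$-invariant curve in $\{|w|>1\}$ (a radial-type ray or a repelling petal boundary) ending at $w_0$. Its image under $h$ is an $f^q$-invariant arc in $\C\setminus K$ accumulating on $K$. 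A Douady--Yoccoz landing argument then shows that this arc lands at a point $z_0\in K$ fixed by $f^q$, with $f^q$ attracting along the arc in backward time. If $z_0=0$, this contradicts $f'(0)^q=e^{2\pi iq\theta}\ne1$ together with the existence of an invariant arc landing at an irrationally indifferent point; a snail-lemma type argument should suffice here. If $z_0\neq0$, it contradicts the absence in $K$ of periodic points other than $0$, which is a property of Siegel compacta established in \cite{PM97}.

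For the equality $\rho(g)=\theta$, I would use the angular cocycle. Since $0\in K$ and $K$ is full, $h(w)/w$ is nonvanishing near $\partial\D$. It has winding number $0$ there, so $\arg(h(w)/w)$ is a bounded single-valued function. Hence $\rho(g)$, computed from lifts, equals the average along $g$-orbits near the circle of the angular increment $\phi(z)=\frac1{2\pi}\im\log\bigl(f(z)/z\bigr)$. Here $\phi$ is harmonic on a simply connected neighborhood of $K$, and $\phi(0)=\theta$. By Denjoy, $g|_{\partial\D}$ is uniquely ergodic with measure $\nu$, so $\rho(g)\equiv\int\phi\circ h\,d\nu \pmod 1$. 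The remaining point, which I expect to be the main obstacle, is identifying this average with $\phi(0)$. I would first try comparison with the linear model through quasi-invariant Jordan curves $\Gamma$ close to $K$. On such a curve the winding of $f(z)$ relative to $z$ around $0$ is forced to agree, up to $o(1)$, both with the rotation of $g$ on the corresponding circle and, by the argument principle applied to $f(z)-e^{2\pi i t}z$, with $\theta$. Failing that, I would fall back on the original argument of \cite{PM97}.
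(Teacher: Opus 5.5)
The paper gives no proof of this statement: it is imported from \cite{PM97} (the fundamental construction and Theorem~2), and the paper explicitly restricts to Siegel compacta for which the correspondence holds. Your first paragraph (Schwarz reflection of $g=h^{-1}\circ f\circ h$, invertibility via $g^{-1}$, orientation and degree) correctly reconstructs that fundamental construction. There is a genuine gap, however, in the two assertions that carry the content: that $\rho(g)$ is irrational and that $\rho(g)\equiv\pm\theta$.

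\textbf{The equality $\rho(g)\equiv\pm\theta$.} You stop at the decisive step yourself, and the set-up you propose fails as written.
\begin{enumerate}
\item $h$ has no boundary extension, so $\phi\circ h$ is not a function on $\partial\D$. Radial limits exist only Lebesgue-almost everywhere, and the Denjoy measure $\nu$ of an analytic circle diffeomorphism can be singular. So $\int\phi\circ h\,d\nu$ has no meaning.
\item Winding number zero makes $\arg(h(w)/w)$ single-valued on $\{|w|>1\}$, but not bounded. For a general full continuum containing $0$, for instance one spiralling into $0$, it is unbounded near $\partial\D$.
\item $g$-orbits off the circle exist only for finitely many steps. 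Hence the coboundary $\arg u(w)-\arg u(g^N(w))$, with $u(w)=h(w)/w$, does not disappear after dividing by $N$.
\end{enumerate}
What is actually needed is twofold. First, for $N\to\infty$, orbit segments starting near the circle whose endpoint returns to within bounded hyperbolic distance of the start; a Koebe estimate then bounds the change of $\arg u$. Second, convergence of the averages of the harmonic function $\im\log(f(z)/z)$ along $f$-orbits to its value $2\pi\theta$ at $0$, obtained from the maximum principle on a domain bounded by such a curve. This is precisely the machinery of Sections~\ref{sec:tube}--\ref{sec:averages} (Propositions~\ref{prop:quasi} and \ref{prop:criterion}). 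It is built on Denjoy's conjugacy and Lemma~\ref{lem:rotation-match}, so it already presupposes that $\rho(g)$ is irrational. Your argument-principle heuristic for $f(z)-e^{2\pi it}z$ only produces a zero count, and you have not shown how it would force the average rotation to equal $\theta$.

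\textbf{Irrationality.} In the case $z_0\neq0$ you rely on $K\setminus\{0\}$ containing no periodic points. In the logic of this paper that fact is a consequence of the statement you are proving. It follows from uniform recurrence (Proposition~\ref{prop:recurrence}), the coprimality of consecutive $q_n$, and the averages of $P(z)=z$, and all of these use the external-circle correspondence. Unless you supply an independent proof, the argument is circular. You also omit the case $g^q|_{\partial\D}=\id$; it is excluded easily, since it forces $f^q=\id$ by the identity principle, contradicting $f'(0)^q\neq1$. Finally, once $\rho(g)\equiv\pm\theta$ is established, irrationality is immediate. So the whole weight of the statement rests on the identification you have not proved.
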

See \cite[Theorem 2 and the fundamental construction]{PM97} and
\cite[Section 1]{PMQ}. The conformal representation $h$
does not have a continuous
extension to the unit circle because $K$ is not locally connected.

Use upper-half-plane coordinates
\begin{equation}\label{eq:psi}
 \Psi(w)=h(e^{-2\pi i w}),\qquad w\in\HH:=\{\im w>0\}.
\end{equation}
The map $\Psi$ is $1$-periodic and induces a conformal bijection from
$\HH/\Z$ onto $\C\setminus K$. The upper half-plane correspond to the exterior disk.

The external map has an increasing real lift $g$ with
\begin{equation}\label{eq:g-lift}
 g(x+1)=g(x)+1,\qquad g'(x)>0,
 \qquad f\circ\Psi=\Psi\circ g
\end{equation}
in a sufficiently thin upper strip. Changing the lift by an integer,
we may assume its rotation number is $\beta\in(0,1)\setminus\mathbb Q$.
Its sign relative to $\theta$ depends on the circle orientation and has
no role in the proof.

Shrink the strip so that, for some $\Delta>0$, $g$ is holomorphic on a
neighborhood of
\[
 S_\Delta=\{w:|\im w|<\Delta\},
\]
its derivative has no zeros there, and
\begin{equation}\label{eq:tau}
 \tau:=\sup_{S_\Delta}\left|\frac{g''}{g'}\right|<\infty.
\end{equation}
These properties follow from $g'>0$ on $\R$ and compactness modulo
$\Z$, after restricting a slightly larger strip. We choose the analytic
logarithm of $g'$ that is real on $\R$.

\begin{hypothesis}[Denjoy \cite{Denjoy}]\label{input:denjoy}
An orientation-preserving $C^1$ circle diffeomorphism with irrational
rotation number and $\log g'$ of bounded variation is topologically
conjugate to the corresponding rigid rotation.
\end{hypothesis}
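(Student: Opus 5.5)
We would follow the classical route: first construct the Poincaré semiconjugacy, then show that the only possible obstruction to it being a homeomorphism, a wandering interval, cannot exist under the bounded variation hypothesis.

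\textbf{Semiconjugacy.} Let $g$ be a lift with irrational rotation number $\beta$. The combinatorial order of an orbit $\{g^n(x)+m\}$ on $\R$ coincides with that of $\{n\beta+m\}$. This is the standard Poincaré argument: since $\beta\notin\mathbb Q$, $g$ has no periodic orbits, so $g^n(x)-x-m$ never vanishes and has a constant sign in $x$. It follows that the map $n\beta+m\mapsto g^n(x_0)+m$ is monotone. Extending it by monotonicity and taking closures gives a continuous, nondecreasing, degree-one map $H$ with $H\circ g=H\circ R_\beta$ (or the inverse relation, depending on the chosen direction). If $H$ is not injective, some nondegenerate interval $I$ is collapsed to a point. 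Then the intervals $g^n(I)$, $n\in\Z$, are pairwise disjoint modulo $\Z$, i.e.\ $I$ is a wandering interval. So it suffices to rule out wandering intervals.

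\textbf{Denjoy--Koksma type distortion bound.} Let $p_n/q_n$ be the convergents of $\beta$ and $V=\Var(\log g')$ on one period. The key combinatorial fact is that for any $x$, the arcs $J_k=g^k(J)$, $0\le k<q_n$, are pairwise disjoint modulo $\Z$, where $J=[g^{-q_n}(x),x]$. This is the usual closest-return property of the denominators $q_n$, transferred from the rotation via the order equivalence. For $y,z\in J$ we then get
\[
 \bigl|\log (g^{q_n})'(y)-\log (g^{q_n})'(z)\bigr|\le\sum_{k<q_n}\bigl|\log g'(g^ky)-\log g'(g^kz)\bigr|\le V.
\]
Taking $y=g^{-q_n}(x)$ and $z=x$ gives $(g^{q_n})'(x)\,(g^{-q_n})'(x)\ge e^{-V}$ for every $x$. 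Note that $(g^{q_n})'(g^{-q_n}x)=1/(g^{-q_n})'(x)$, and $V$ controls the ratio between that value and $(g^{q_n})'(x)$.

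\textbf{No wandering interval.} Suppose $I$ wanders. By the change of variables formula,
\[
 |g^{q_n}(I)|+|g^{-q_n}(I)|=\int_I\bigl((g^{q_n})'+(g^{-q_n})'\bigr)\ge\int_I2\sqrt{(g^{q_n})'(g^{-q_n})'}\ge2e^{-V/2}|I|.
\]
The iterates of $I$ are disjoint modulo $\Z$, so $\sum_{m\in\Z}|g^m(I)|\le1$, and hence $|g^{\pm q_n}(I)|\to0$. This contradicts the uniform lower bound $2e^{-V/2}|I|>0$. Therefore $H$ is a homeomorphism conjugating $g$ to the rotation.

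The main obstacle is the disjointness claim for $J,g(J),\dots,g^{q_n-1}(J)$. It rests on the fact that for the rotation, no point of $\{k\beta\}_{0<|k|<q_n}$ lies in the arc between $0$ and $q_n\beta$. We would check this carefully from the continued fraction best-approximation property and transfer it to $g$ through the order equivalence. The remaining steps are routine.
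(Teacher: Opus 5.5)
The paper gives no proof of this statement. It imports it from Denjoy as a black box, and afterwards uses only the continuity and monotonicity of $H$, normalized by $g\circ H=H\circ R_\beta$.

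Your argument is the classical proof, and it is correct. The arcs $g^k(J)$, $0\le k<q_n$, have their endpoints on a single orbit. So the order equivalence transfers to them the disjointness of the rotation arcs with endpoints $t+k\beta$, $t+k\beta+e_n$ (take $t=-e_n$). That disjointness is exactly Lemma~\ref{lem:cf}, so your ``main obstacle'' is already settled by \eqref{eq:best-approx}. Summing $\Var(\log g')$ over these arcs gives $e^{-V}\le(g^{q_n})'(x)\,(g^{-q_n})'(x)\le e^{V}$. AM--GM together with $\sum_m|g^m(I)|\le1$ then rules out any wandering interval.

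Section~\ref{sec:return} uses the same two ingredients: disjoint return arcs, and summing the variation of $\log g'$ over them. But the paper gets the disjointness by pushing rotation arcs through the homeomorphism $H$. It also bounds $(g^{q_n})'$ itself through $\phi=\log g'\circ H$ and Lemma~\ref{lem:dk}. As written, those arguments presuppose the theorem. Your version reads disjointness off the cyclic order of one orbit and controls only the product $(g^{q_n})'(g^{-q_n})'$, which is what makes it non-circular. Citing buys brevity; your proof would make the paper self-contained on this input.

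One slip to repair concerns the direction of the semiconjugacy. The map $n\beta+m\mapsto g^n(x_0)+m$ extends to an increasing map that jumps across the gaps left by a wandering interval, so it is not the continuous one. The continuous, nondecreasing, degree-one semiconjugacy is its inverse $h\colon g^n(x_0)+m\mapsto n\beta+m$. You extend it to the orbit closure by monotonicity; it has no jumps there because its image is dense. On the complementary gaps you extend it as a constant. It satisfies $h\circ g=R_\beta\circ h$; your displayed relation $H\circ g=H\circ R_\beta$ is a typo. Non-injectivity of $h$ is what produces the wandering interval. Once that is excluded, $H=h^{-1}$ is the conjugacy in the paper's normalization.
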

Our analytic map satisfies these hypotheses. Hence there is an
increasing homeomorphism $H:\R\to\R$ such that
\begin{equation}\label{eq:H}
 H(t+1)=H(t)+1,\qquad g(H(t))=H(t+\beta).
\end{equation}
Only the monotonicity and continuity of $H$ will be used. No
differentiability or absolute continuity of this conjugacy is assumed.

\section{Continued fractions and orbit matching}\label{sec:cf}

Write $\beta=[0;a_1,a_2,\ldots]$. Set
\[
 p_{-1}=1,\quad p_0=0,\qquad q_{-1}=0,\quad q_0=1,
\]
and, for $n\ge0$,
\[
 p_{n+1}=a_{n+1}p_n+p_{n-1},\qquad
 q_{n+1}=a_{n+1}q_n+q_{n-1}.
\]
Denote
\begin{equation}\label{eq:cf-errors}
 e_n=q_n\beta-p_n,\qquad \delta_n=|e_n|,
 \qquad Q_n=q_{n+1}.
\end{equation}
We disregard finitely many initial indices whenever necessary.

\begin{lemma}[Approximation and separation]\label{lem:cf}
The denominators tend to infinity; consecutive denominators are coprime;
$e_n$ and $e_{n+1}$ have opposite signs; and
\begin{equation}\label{eq:cf-bounds}
 \frac1{q_{n+1}+q_n}<\delta_n<\frac1{q_{n+1}}.
\end{equation}
If $0<r<q_{n+1}$ and $s\in\Z$, then
\begin{equation}\label{eq:best-approx}
 |r\beta-s|\ge\delta_n.
\end{equation}
Consequently, for any $t\in\R$, the oriented circle intervals with
endpoints
\[
 t+j\beta,\qquad t+j\beta+e_n,
 \qquad 0\le j<q_{n+1},
\]
have pairwise disjoint interiors.
\end{lemma}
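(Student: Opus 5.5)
The plan is to prove the classical continued-fraction facts by induction on the recursion, and then derive the disjointness claim from the best-approximation inequality \eqref{eq:best-approx}.

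\textbf{Algebraic facts.} Since every $a_k\ge1$, the recursion gives $q_{n+1}\ge q_n+q_{n-1}$. Hence the $q_n$ grow at least like Fibonacci numbers and tend to infinity. The determinant identity $p_nq_{n-1}-p_{n-1}q_n=(-1)^{n+1}$ follows by induction from the recursion, starting from $p_0q_{-1}-p_{-1}q_0=-1$. It shows that consecutive denominators are coprime.

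\textbf{Signs and the bounds \eqref{eq:cf-bounds}.} Introduce the complete quotients $\beta_{n+1}=[a_{n+1};a_{n+2},\ldots]>1$, so that $\beta=(p_n\beta_{n+1}+p_{n-1})/(q_n\beta_{n+1}+q_{n-1})$. Then the determinant identity gives
\[
 e_n=\frac{(-1)^n}{q_n\beta_{n+1}+q_{n-1}}\cdot(\pm1).
\]
The sign alternates in $n$, which gives the opposite signs of $e_n$ and $e_{n+1}$. For the bounds, use $a_{n+1}<\beta_{n+1}<a_{n+1}+1$; the inequalities are strict because $\beta$ is irrational. This yields $q_{n+1}<q_n\beta_{n+1}+q_{n-1}<q_{n+1}+q_n$, which is \eqref{eq:cf-bounds}. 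An equivalent route is the identity $\delta_{n-1}=a_{n+1}\delta_n+\delta_{n+1}$.

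\textbf{Best approximation \eqref{eq:best-approx}.} This is the only step with real content. Given $0<r<q_{n+1}$ and $s\in\Z$, solve $r=xq_n+yq_{n+1}$ and $s=xp_n+yp_{n+1}$ with $x,y\in\Z$; this is possible because the determinant is $\pm1$. Neither $x$ nor $y$ can be zero: $y=0$ forces $r=xq_n$, and then $|r\beta-s|=|x|\delta_n\ge\delta_n$ directly. When both are nonzero, $0<r<q_{n+1}$ forces $x$ and $y$ to have opposite signs. Since $e_n$ and $e_{n+1}$ also have opposite signs, the terms $xe_n$ and $ye_{n+1}$ in $r\beta-s=xe_n+ye_{n+1}$ have the same sign. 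Hence $|r\beta-s|\ge|x|\delta_n\ge\delta_n$.

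\textbf{Disjointness.} Suppose two intervals $I_j$ and $I_k$ with $j<k<q_{n+1}$ had overlapping interiors on $\R/\Z$. All intervals have the same orientation (the sign of $e_n$) and the same length $\delta_n$. Overlapping interiors would therefore force the left endpoints to differ by less than $\delta_n$ modulo $1$, that is, $|(k-j)\beta-s|<\delta_n$ for some integer $s$. Since $0<k-j<q_{n+1}$, this contradicts \eqref{eq:best-approx}.

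The main obstacle is the sign argument in the best-approximation step; everything else is routine bookkeeping.
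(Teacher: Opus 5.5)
Your proposal is correct and follows the paper's proof essentially step for step: the determinant identity for coprimality, the complete-quotient formula $\delta_n=1/(q_n\beta_{n+1}+q_{n-1})$ for the alternating signs and the bounds, the unimodular decomposition $(s,r)=x(p_n,q_n)+y(p_{n+1},q_{n+1})$ with the same-sign argument for $r\beta-s=xe_n+ye_{n+1}$, and disjointness deduced directly from best approximation. One small wording slip: it is $x$ that cannot vanish, since $x=0$ would make $r$ a multiple of $q_{n+1}$. By contrast, $y=0$ can occur (for example $r=q_n$), and that is exactly the case you then dispose of directly.
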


\begin{proof}
The recurrences give
$|p_nq_{n+1}-p_{n+1}q_n|=1$, with alternating sign. In particular, both
$p_n,q_n$ and $q_n,q_{n+1}$ are relatively prime. Growth of the
recurrences gives $q_n\to\infty$.

Let $b_{n+1}=[a_{n+1};a_{n+2},\ldots]$. The finite continued-fraction
identity, obtained by induction on the defining fractional-linear
transformations, is
\[
 \beta=\frac{p_n b_{n+1}+p_{n-1}}
              {q_n b_{n+1}+q_{n-1}}.
\]
Taking the difference from $p_n/q_n$ gives alternating error signs and
\[
 \delta_n=\frac1{q_n b_{n+1}+q_{n-1}}
          =\frac1{q_{n+1}+q_n/b_{n+2}}.
\]
Since $b_{n+2}>1$, this proves \eqref{eq:cf-bounds}.

For \eqref{eq:best-approx}, use the unimodular basis of integer vectors
$(p_n,q_n),(p_{n+1},q_{n+1})$ to write
\[
 (s,r)=a(p_n,q_n)+b(p_{n+1},q_{n+1}),\qquad a,b\in\Z.
\]
If $b\ge1$, the inequality $r<q_{n+1}$ forces $a\le-1$.
If $b\le-1$, the inequality $r>0$ forces $a\ge1$.
In either case the two terms in
$r\beta-s=ae_n+be_{n+1}$ have the same sign, and its absolute value is
at least $\delta_n$. If $b=0$, then $a\ge1$ and the conclusion is
immediate.

For large $n$, $\delta_n<1/2$. Interior intersection of two oriented
intervals of length $\delta_n$ would imply that the circle distance
between their starting points is less than $\delta_n$. Their index
difference has absolute value strictly between $0$ and $q_{n+1}$, in
contradiction with \eqref{eq:best-approx}.
\end{proof}

\begin{lemma}[Permutation matching of two rotation segments]\label{lem:rotation-match}
For every $t,s\in\R$ there is a permutation $\sigma$ of
$\{0,\ldots,Q_n-1\}$ satisfying
\begin{equation}\label{eq:phase-match}
 \dist_{\R/\Z}\bigl(t+j\beta,s+\sigma(j)\beta\bigr)<3\delta_n
 \qquad(0\le j<Q_n).
\end{equation}
\end{lemma}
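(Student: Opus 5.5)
The plan is to reduce to a single shift of phase and then use the cyclic structure of the orbit segment of length $Q_n=q_{n+1}$. Since $\beta$ is irrational and the set $\{s+k\beta\}$ is what matters, we first reduce to the case $s=t+m\beta+u$: by the three-distance structure, the $Q_n$ points $t+j\beta$, $0\le j<Q_n$, split the circle into gaps of length at most $\delta_n+\delta_{n-1}$. That is too crude in general, so instead we exploit Lemma~\ref{lem:cf} directly. The key observation is that $q_{n+1}\beta\equiv e_{n+1}$ modulo $\Z$ with $|e_{n+1}|=\delta_{n+1}<\delta_n$. Hence the segment $\{t+j\beta\}_{0\le j<Q_n}$ is, up to an error $\delta_{n+1}$, invariant under the cyclic shift $j\mapsto j+1 \bmod Q_n$. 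More precisely, $t+(j+k)\beta$ and $t+((j+k)\bmod Q_n)\beta$ differ by $e_{n+1}$ or $0$.

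Next, we compare the two segments. Write $s-t\equiv x \pmod 1$. Among the points $t+k\beta$, $0\le k<Q_n$, choose $k$ so that $\dist_{\R/\Z}(t+k\beta,s)$ is minimal. We expect to show that this distance is less than $2\delta_n$ (or $\delta_n+\delta_{n+1}$). The argument uses the fact that the intervals with endpoints $t+j\beta$ and $t+j\beta+e_n$ in Lemma~\ref{lem:cf} have disjoint interiors. Together with the orbit points obtained by using $e_{n+1}$ of opposite sign, they tile the circle: the standard dynamical partition of level $n$ consists of $q_{n+1}$ intervals of length $\delta_n$ and $q_n$ intervals of length $\delta_{n+1}$, with total length $q_{n+1}\delta_n+q_n\delta_{n+1}=1$. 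Every point of the circle therefore lies within $\delta_n$ of some orbit point $t+k\beta$, $0\le k<Q_n+q_n$. For indices $k\ge Q_n$ we subtract $Q_n$ and pay $\delta_{n+1}$. This gives some $k<Q_n$ with $\dist(s,t+k\beta)<\delta_n+\delta_{n+1}<2\delta_n$.

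Finally, we define $\sigma(j)=(j-k)\bmod Q_n$, so that $t+j\beta$ is matched with $s+\sigma(j)\beta$. Then
\[
 s+\sigma(j)\beta \equiv (s-k\beta)+j\beta+\epsilon_j,\qquad \epsilon_j\in\{0,-e_{n+1}\},
\]
so the distance is at most $\dist(s,t+k\beta)+\delta_{n+1}<\delta_n+2\delta_{n+1}<3\delta_n$. The map $\sigma$ is a bijection of $\{0,\ldots,Q_n-1\}$.

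The main obstacle is the covering claim in the second step: that the orbit $t+k\beta$, $0\le k<q_{n+1}+q_n$, is $\delta_n$-dense in the sense needed. If the tiling statement is awkward to justify from Lemma~\ref{lem:cf} alone, we will fall back on a direct argument. The points $t+k\beta$, $0\le k<q_{n+1}$, partition the circle, and any gap between consecutive points has length of the form $|r\beta-s'|$ with $|r|<q_{n+1}$. The three-gap theorem bounds such a gap by $\delta_n+\delta_{n-1}$, which is not enough. So we will instead use the disjoint-interior statement together with a measure count, $q_{n+1}\delta_n>q_{n+1}/(q_{n+1}+q_n)$, to locate, for a given $s$, an index $k<Q_n$ with $s$ inside the interval between $t+k\beta$ and $t+k\beta+e_n$ up to an error absorbed by the slack $3\delta_n$.
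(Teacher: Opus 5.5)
\textbf{Verdict: genuine gap.} The proposal asserts, but does not prove, the density of the orbit segment, which is the one step that carries the lemma. The fallback offered for it would not work.

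Your mechanism is the right one. A cyclic shift of indices modulo $Q_n$ costs at most $\delta_{n+1}$ at the wrap-around, because $q_{n+1}\beta\equiv e_{n+1}\pmod 1$. The paper's permutation $\sigma(j)P\equiv jP+k\pmod Q$ is also such a cyclic shift, by $kP^{-1}$. What you do not establish is that every $s$ lies within $\delta_n+\delta_{n+1}$ of some $t+k\beta$ with $0\le k<Q_n$. You derive this from the level-$n$ dynamical partition, but Lemma~\ref{lem:cf} only says that the $q_{n+1}$ intervals of length $\delta_n$ have disjoint interiors. The identity $q_{n+1}\delta_n+q_n\delta_{n+1}=1$ yields a covering only after you also show two further facts:
\begin{itemize}
\item the $q_n$ intervals from $t+j\beta$ to $t+j\beta+e_{n+1}$ have disjoint interiors among themselves, which is Lemma~\ref{lem:cf} at level $n+1$;
\item they are disjoint from the first family, which needs a further unimodular-basis case check.
\end{itemize}
You do not do either, and you flag this yourself as the main obstacle.

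The fallback does not close the gap, for two reasons.
\begin{itemize}
\item The three-gap bound is misquoted. For the $q_{n+1}$ points $t+k\beta$ the gaps take only the values $\delta_n$ and $\delta_n+\delta_{n+1}$, so the route you discarded would in fact suffice. It is, however, also an unproved outside input.
\item A measure count cannot locate $s$. Disjointness plus total length $q_{n+1}\delta_n$ leaves a complement of measure $q_n\delta_{n+1}$. As far as that information goes, this complement could be a single arc, and $q_n\delta_{n+1}$ can be far larger than $\delta_n$. For the golden mean, $\delta_n<2\delta_{n+1}$ for every $n$, so $q_n\delta_{n+1}>q_n\delta_n/2$.
\end{itemize}

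The cheap repair is the paper's grid comparison. For $0\le k<Q_n$ one has $|k\beta-kp_{n+1}/q_{n+1}|<\delta_{n+1}$. The points $kp_{n+1}/q_{n+1}$ form, modulo $1$, the full grid of mesh $1/Q_n<2\delta_n$. Hence some $k<Q_n$ has $\dist_{\R/\Z}(s,t+k\beta)\le 1/(2Q_n)+\delta_{n+1}$. Your shift then gives an error at most $1/(2Q_n)+2\delta_{n+1}<3\delta_n$.

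The paper goes one step further: it reads the shift directly off the grid point nearest $t-s$, so no density property of the orbit segment is ever needed.

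Minor point: in your last display, $\epsilon_j\in\{0,e_{n+1}\}$, not $\{0,-e_{n+1}\}$.
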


\begin{proof}
Write $Q=q_{n+1}$ and $P=p_{n+1}$. Choose an integer $k$ with
\[
 |t-s-k/Q|\le\frac1{2Q}.
\]
Because $P$ is invertible modulo $Q$, the congruence
\begin{equation}\label{eq:permutation-explicit}
 \sigma(j)P\equiv jP+k\pmod Q
\end{equation}
defines a permutation. Modulo an integer,
\begin{align*}
 t+j\beta-s-\sigma(j)\beta
 &=t-s-k/Q+(j-\sigma(j))(\beta-P/Q).
\end{align*}
Its circle distance from zero is at most
\[
 \frac1{2Q}+Q|\beta-P/Q|
 =\frac1{2Q}+\delta_{n+1}<\frac3{2Q}.
\]
On the other hand, \eqref{eq:cf-bounds} gives
$\delta_n>1/(Q+q_n)>1/(2Q)$. This proves the claim.
\end{proof}

The matching does not need to preserve circular  ordering.

\section{A real return estimate}\label{sec:dk}

Set
\begin{equation}\label{eq:real-constants}
 V=\Var_{\R/\Z}(\log g'),\qquad B=e^V,\qquad \Lambda=e^{4V}.
\end{equation}
These are fixed finite constants depending on $g$.

\begin{lemma}[A coarse Denjoy--Koksma estimate]\label{lem:dk}
If $\phi:\R/\Z\to\R$ is continuous and of bounded variation, then
\begin{equation}\label{eq:dk}
 \left|\sum_{j=0}^{q_n-1}\phi(t+j\beta)
      -q_n\int_0^1\phi(u)\dd u\right|
 \le4\Var(\phi)
\end{equation}
for every $t$ and every sufficiently large $n$.
\end{lemma}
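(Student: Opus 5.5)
The plan is to prove \eqref{eq:dk} by the classical Denjoy--Koksma partition argument. The essential input is that the $q_n$ points $t+j\beta$, $0\le j<q_n$, are well distributed on the circle: each arc of a suitable partition of length about $1/q_n$ contains exactly one point, up to a controlled defect. Fix $t$ and $n$ large. Write $\beta=p_n/q_n+e_n/q_n$ with $|e_n|=\delta_n<1/q_{n+1}$. Then
\[
 t+j\beta=t+jp_n/q_n+j e_n/q_n \pmod 1,
\]
and since $p_n$ and $q_n$ are coprime (Lemma~\ref{lem:cf}), the points $t+jp_n/q_n$ run exactly over the grid $t+i/q_n$, $0\le i<q_n$. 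The perturbation $je_n/q_n$ has modulus less than $\delta_n<1/q_{n+1}\le 1/q_n$.

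\textbf{Step 1 (partition).} Partition $\R/\Z$ into the $q_n$ half-open arcs
\[
 I_i=[\,t+i/q_n-\tfrac{1}{2q_n},\;t+i/q_n+\tfrac{1}{2q_n}),\qquad 0\le i<q_n.
\]
Each has length $1/q_n$. Let $j(i)$ be the unique index with $jp_n\equiv i \pmod{q_n}$. Then the orbit point $x_i:=t+j(i)\beta$ lies within distance $\delta_n<1/q_n$ of the center of $I_i$, so it lies in $I_i$ or in one of the two neighbouring arcs. To avoid this nuisance I will instead enlarge each arc to $J_i=I_i\cup I_{i\pm1}$, a union of at most three consecutive arcs containing both $x_i$ and $I_i$.

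\textbf{Step 2 (comparison).} Write
\[
 \sum_j\phi(t+j\beta)-q_n\int\phi=\sum_i q_n\int_{I_i}\bigl(\phi(x_i)-\phi(u)\bigr)\dd u .
\]
Each summand is bounded by $\Var_{J_i}(\phi)$, because both $x_i$ and $u$ lie in the connected arc $J_i$. Summing over $i$, every elementary arc $I_m$ belongs to at most three of the $J_i$, so the total is at most $3\Var(\phi)\le4\Var(\phi)$.

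\textbf{Step 3 (refinement and main obstacle).} The only delicate point is Step 1: checking that $x_i$ really lies in a connected arc of bounded combinatorial size around $I_i$, with bounded overlap multiplicity. This holds because the displacement is at most $\delta_n<1/q_{n+1}\le1/q_n$, which is strictly less than one arc length. This is where "sufficiently large $n$" is used, ensuring $q_{n+1}\ge q_n$ and $\delta_n<1/2$, so that circle arcs are not wrapped. Since the endpoints of the $J_i$ may not be points of continuity issues, continuity of $\phi$ makes variation over half-open versus closed arcs irrelevant. If one wants the sharper standard constant, one can use the monotone shift of the points (all $je_n/q_n$ have the same sign) to pair each point with a single adjacent arc, giving $2\Var(\phi)$. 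However, the coarse bound $4\Var(\phi)$ stated in the lemma already follows from the three-arc argument.
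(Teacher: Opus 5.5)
Your argument is correct, and it reaches the estimate by a different route than the paper. Both proofs use the same arithmetic input. The map $j\mapsto jp_n \bmod q_n$ is a bijection onto the grid $t+i/q_n$, and each orbit point $t+j\beta$ lies within $|je_n|/q_n<\delta_n<1/q_n$ of its grid point.

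\textbf{The paper's route.} It turns this into a discrepancy bound that does not depend on $\phi$: every circle arc $J$ contains $q_n|J|$ orbit points up to an error of $4$. It then applies a Koksma-type integration by parts to the signed measure $\sum_j\delta_{t+j\beta}-q_n\,du$. This is done first for $C^1$ functions, and then extended to continuous functions of bounded variation through piecewise-linear interpolants.

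\textbf{Your route.} You pair each orbit point with one cell of the uniform partition. You then use only two facts: $|\phi(a)-\phi(b)|\le\Var_J(\phi)$ for $a,b$ in an arc $J$, and additivity of the variation over the tiling arcs $\overline{I_m}$.

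\textbf{What each buys.} Your approach avoids integration by parts and the approximation step, and it does not really need continuity of $\phi$. It also gives the better constant $3\Var(\phi)$. With your one-sided refinement it gives $2\Var(\phi)$, since all displacements $je_n/q_n$ have the sign of $e_n$. In the paper a better constant would only improve $\Lambda=e^{4V}$ to $e^{3V}$ or $e^{2V}$, which plays no structural role. The paper's route, in exchange, isolates a uniform equidistribution statement about the orbit segment that is independent of $\phi$.

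\textbf{A small correction to Step~3.} The inequality $q_{n+1}\ge q_n$ holds for every $n\ge0$, so it is not what ``sufficiently large $n$'' is needed for. What you actually want is $q_n\ge4$. Then $J_i$, of length $3/q_n$, is a proper arc whose variation splits over its three cells. For $q_n\le3$ the trivial bound $|\phi(a)-\phi(b)|\le\Var(\phi)$ already gives at most $q_n\Var(\phi)\le3\Var(\phi)$.
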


\begin{proof}
Put $q=q_n$ and $p=p_n$. The points $t+jp/q$, $0\le j<q$, form a
uniform $q$-point grid on the circle. Each corresponding point
$t+j\beta$ has circle distance less than
\[
 q|\beta-p/q|=\delta_n<1/q
\]
from its grid point. A circle interval $J$ of length $|J|$ contains
between $q|J|-2$ and $q|J|+2$ points of the grid, allowing either endpoint
convention. Enlarging $J$ by $\delta_n$ at both ends, or deleting such
endpoint pieces, shows
\begin{equation}\label{eq:count-discrepancy}
 \left|\#\{0\le j<q:t+j\beta\in J\}-q|J|\right|\le4.
\end{equation}
If an enlarged interval is the whole circle, or a deleted interval is
empty, the same bound follows directly.

Let
\[
 \nu=\sum_{j=0}^{q-1}\delta_{t+j\beta}-q\,du.
\]
It has total mass zero. By \eqref{eq:count-discrepancy}, its cumulative
mass on any interval starting at $0$ has absolute value at most $4$.
For a continuously differentiable periodic $\phi$, integration by parts,
or simply Fubini applied to
$\phi(x)=\phi(1)-\int_x^1\phi'(u)\dd u$, gives
\[
 \left|\int\phi\dd\nu\right|\le4\int_0^1|\phi'(u)|\dd u.
\]
A continuous function of bounded variation can be approximated uniformly
by its piecewise-linear interpolants, whose total variations do not
exceed $\Var(\phi)$. Applying the same integration-by-parts formula to
the interpolants and passing to the limit proves \eqref{eq:dk}.
\end{proof}

\begin{proposition}[Uniform derivative bound at denominators]\label{prop:denjoy-bound}
For all sufficiently large $n$ and every $x\in\R$,
\begin{equation}\label{eq:denjoy-bound}
 \Lambda^{-1}\le (g^{q_n})'(x)\le\Lambda.
\end{equation}
\end{proposition}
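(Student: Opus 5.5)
We argue via the classical Denjoy comparison, replacing Lebesgue measure by the unique $g$-invariant probability measure so that Lemma~\ref{lem:dk} can be applied through the conjugacy $H$ of \eqref{eq:H}. Fix $n$ large and $x\in\R$, and write $\log (g^{q_n})'(x)=\sum_{j=0}^{q_n-1}\log g'(g^j(x))$ by the chain rule. With $x=H(t)$, \eqref{eq:H} gives $g^j(x)=H(t+j\beta)$. The function $\phi=\log g'\circ H$ is continuous and $1$-periodic on $\R/\Z$. Since $H$ is an increasing homeomorphism, $\Var(\phi)=\Var(\log g')=V$; this uses only the monotonicity and continuity of $H$, as promised.

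Lemma~\ref{lem:dk} then yields
\[
 \Bigl|\log (g^{q_n})'(x)-q_n\int_0^1\log g'(H(u))\dd u\Bigr|\le 4V .
\]
It remains to show that the mean $m=\int_0^1\log g'(H(u))\dd u$ vanishes. Note that $m=\int\log g'\dd\mu$, where $\mu=H_*(du)$ is the $g$-invariant probability measure on the circle.

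To see that $m=0$, apply the same estimate at the point $t$ and take $x=H(t)$ ranging over the circle. We get $(g^{q_n})'(x)\ge e^{q_nm-4V}$ for every $x$. Now integrate over one period: $g^{q_n}$ lifts a degree-one circle map, so $\int_0^1 (g^{q_n})'(x)\dd x=1$. Hence $e^{q_nm-4V}\le1$, that is, $m\le 4V/q_n$. The same integral against the upper bound gives $e^{q_nm+4V}\ge 1$, that is, $m\ge -4V/q_n$. Letting $n\to\infty$ (recall $q_n\to\infty$ by Lemma~\ref{lem:cf}) forces $m=0$. Alternatively, $\int\log g'\dd\mu$ is the Lyapunov exponent of $\mu$, which vanishes for a circle diffeomorphism, but the integral argument is self-contained.

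With $m=0$, the displayed estimate becomes $|\log(g^{q_n})'(x)|\le 4V$, which is exactly $\Lambda^{-1}\le(g^{q_n})'(x)\le\Lambda$ with $\Lambda=e^{4V}$. The main point requiring care is the vanishing of the mean $m$. The other step to check is that Lemma~\ref{lem:dk} genuinely applies to $\phi=\log g'\circ H$, which is merely continuous and of bounded variation. This is precisely the generality in which the lemma was stated, and continuity of $\phi$ follows from that of $H$. The threshold ``sufficiently large $n$'' is inherited from Lemma~\ref{lem:dk} and is uniform in $x$.
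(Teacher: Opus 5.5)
Your proposal is correct and follows the paper's proof essentially step for step. You apply Lemma~\ref{lem:dk} to $\phi=\log g'\circ H$, whose variation is $V$ by the monotonicity of $H$, to get $|\log(g^{q_n})'(x)-q_n m|\le 4V$ uniformly in $x$, and you force $m=0$ from $\int_0^1(g^{q_n})'(x)\,dx=1$ and $q_n\to\infty$, exactly as the paper does (the Lyapunov-exponent remark is just an aside).
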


\begin{proof}
Apply Lemma~\ref{lem:dk} to
\[
 \phi(t)=\log g'(H(t)).
\]
The function is continuous and has variation $V$, since $H$ is
increasing and has degree one. Put $I=\int_0^1\phi(t)\dd t$.
The chain rule and \eqref{eq:H} imply
\[
 \log(g^{q_n})'(H(t))=\sum_{j=0}^{q_n-1}\phi(t+j\beta).
\]
Thus, for every $x$,
\[
 q_n I-4V\le\log(g^{q_n})'(x)\le q_n I+4V.
\]
The lift $g^{q_n}$ satisfies $g^{q_n}(x+1)=g^{q_n}(x)+1$, so
\[
 \int_0^1(g^{q_n})'(x)\dd x=1.
\]
Integrating the exponential bounds gives $|q_nI|\le4V$. Since
$q_n\to\infty$, necessarily $I=0$. Substitution yields
\eqref{eq:denjoy-bound}.
\end{proof}

`
\section{Return intervals and distortion}\label{sec:return}

Define
\begin{equation}\label{eq:return-def}
 F_n(x)=g^{q_n}(x)-p_n,\qquad
 m_n(x)=F_n(x)-x,\qquad
 \ell_n(x)=|m_n(x)|,\qquad M_n=\max_x\ell_n(x).
\end{equation}
Let $I_n(x)$ be the unoriented closed interval with endpoints $x$ and
$F_n(x)$. The function $m_n$ is $1$-periodic and has constant nonzero
sign. Indeed, from \eqref{eq:H},
\begin{equation}\label{eq:return-conjugacy}
 F_n(H(t))=H(t+e_n).
\end{equation}
Monotonicity of $H$ gives the sign assertion. Its uniform continuity
modulo one and $e_n\to0$ give
\begin{equation}\label{eq:Mn}
 M_n\longrightarrow0.
\end{equation}
For each fixed $n$, $\ell_n$ is positive, periodic, and real analytic;
its positive minimum need not be bounded uniformly in $n$.

\begin{lemma}[Disjointness and total length]\label{lem:disjoint}
For every $x$, the intervals $g^j(I_n(x))$, $0\le j<Q_n$, have
pairwise disjoint interiors modulo $\Z$, and
\begin{equation}\label{eq:length-sum}
 \sum_{j=0}^{Q_n-1}\ell_n(g^j(x))\le1.
\end{equation}
\end{lemma}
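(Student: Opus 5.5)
The plan is to transport the problem through the conjugacy $H$ of \eqref{eq:H} to the rigid rotation, where Lemma~\ref{lem:cf} applies directly. Fix $x$ and write $x=H(t)$; such a $t$ exists because $H$ is an increasing homeomorphism of $\R$. By \eqref{eq:return-conjugacy}, $F_n(x)=H(t+e_n)$. Since $H$ is increasing, $I_n(x)$ is the image under $H$ of the closed interval $J$ with endpoints $t$ and $t+e_n$.

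Next I identify the iterates. Iterating \eqref{eq:H} gives $g^j\circ H=H(\cdot+j\beta)$. Because $g^j$ is an increasing map of the real line, it sends intervals onto intervals with the corresponding endpoints. Hence
\[
 g^j(I_n(x))=H\bigl(J+j\beta\bigr),
\]
and $J+j\beta$ is the interval with endpoints $t+j\beta$ and $t+j\beta+e_n$.

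Lemma~\ref{lem:cf} says these intervals, for $0\le j<q_{n+1}=Q_n$, have pairwise disjoint interiors modulo $\Z$, for all large $n$. Two facts about $H$ transfer this to the $g$-side:
\begin{itemize}
\item $H$ commutes with integer translations;
\item $H$ is an increasing bijection, hence an orientation-preserving homeomorphism of the circle $\R/\Z$.
\end{itemize}
A homeomorphism maps interiors to interiors and preserves disjointness, so the intervals $g^j(I_n(x))$ also have pairwise disjoint interiors modulo $\Z$. There is one subtle point. An interval of length less than $1$ in $\R$ projects injectively to the circle, and I must make sure its image under $H$ also has length less than $1$. This holds because $H$ maps an interval of length less than $1$ to one of length less than $1$, by monotonicity and $H(u+1)=H(u)+1$.

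For the length bound \eqref{eq:length-sum}, note that $g^j(I_n(x))$ has endpoints $g^j(x)$ and $g^j(F_n(x))$. Since $g^{q_n}$ commutes with $g^j$ and with integer translations,
\[
 g^j(F_n(x))=F_n(g^j(x)).
\]
So $g^j(I_n(x))=I_n(g^j(x))$, which has length $\ell_n(g^j(x))$. These are $Q_n$ intervals in $\R/\Z$, each of length less than $1$, with disjoint interiors and injective projections. The sum of their Lebesgue measures is therefore at most the measure of the circle, namely $1$.

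The main obstacle is bookkeeping rather than substance. One must check that the sign convention for $e_n$ matches the unoriented interval $I_n$. One must also ensure $\delta_n<1/2$, so that each interval projects injectively; this is the "sufficiently large $n$" caveat already present in Lemma~\ref{lem:cf}, and it is consistent with discarding finitely many indices.
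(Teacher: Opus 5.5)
Your proof is correct and follows the paper's route. You transport the disjoint rotation intervals of Lemma~\ref{lem:cf} by $H$ via \eqref{eq:return-conjugacy}, use that $g^j$ commutes with $F_n$ to get $g^j(I_n(x))=I_n(g^j(x))$, and conclude by disjointness on a circle of length one; you also make explicit details the paper leaves implicit, namely that $H$-images of intervals of length less than $1$ still have length less than $1$, and the large-$n$ requirement $\delta_n<1/2$.
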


\begin{proof}
By \eqref{eq:return-conjugacy}, these intervals are the images under
$H$ of the disjoint rotation intervals in Lemma~\ref{lem:cf}.
The maps $F_n$ and $g^j$ commute, because $g$ commutes with integer
translations. Consequently
\[
 g^j(I_n(x))=I_n(g^j(x)).
\]
Their lengths are therefore $\ell_n(g^j(x))$. Disjointness on a circle
of length one gives \eqref{eq:length-sum}.
\end{proof}

\begin{lemma}[Derivative versus return-length ratio]\label{lem:distortion}
For $0\le j\le Q_n$ and all $x$,
\begin{equation}\label{eq:distortion}
 B^{-1}\ell_n(g^j(x))
 \le (g^j)'(x)\ell_n(x)
 \le B\ell_n(g^j(x)).
\end{equation}
\end{lemma}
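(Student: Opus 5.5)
The approach is the classical Denjoy distortion argument, using the return interval $I_n(x)$ as the comparison interval. By Lemma~\ref{lem:disjoint} we have $g^j(I_n(x))=I_n(g^j(x))$, so $\ell_n(g^j(x))=|g^j(I_n(x))|$. The mean value theorem then gives a point $y\in I_n(x)$ with
\[
 \ell_n(g^j(x))=(g^j)'(y)\,\ell_n(x).
\]
Hence \eqref{eq:distortion} reduces to the two-sided bound
\[
 B^{-1}\le\frac{(g^j)'(x)}{(g^j)'(y)}\le B
 \qquad\text{for all } x,y\in I_n(x),\ 0\le j\le Q_n .
\]
If $\ell_n(x)=0$ there is nothing to prove, but in fact $\ell_n>0$.

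To get this bound, write the logarithm of the ratio as a telescoping sum:
\[
 \log\frac{(g^j)'(x)}{(g^j)'(y)}=\sum_{i=0}^{j-1}\bigl(\log g'(g^i(x))-\log g'(g^i(y))\bigr).
\]
The $i$-th term is bounded in absolute value by the variation of $\log g'$ over the interval $g^i(I_n(x))=I_n(g^i(x))$. For $i$ ranging over $0\le i<j\le Q_n$, Lemma~\ref{lem:disjoint} says these intervals have pairwise disjoint interiors modulo $\Z$. The variation of $\log g'$ is additive over nonoverlapping intervals, so the sum is at most $\Var_{\R/\Z}(\log g')=V$. The ratio therefore lies in $[e^{-V},e^V]=[B^{-1},B]$.

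The point that needs care is the endpoint case $j=Q_n$, where the sum involves only $i\le Q_n-1$, so it is exactly covered by Lemma~\ref{lem:disjoint}. One must also check that each interval $I_n(g^i(x))$ has length less than $1$, so that its variation is computed on a genuine arc of the circle and additivity applies. This holds for large $n$ by \eqref{eq:Mn}, which is consistent with the standing convention that $n$ is large.
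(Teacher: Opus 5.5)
Your proof is correct and is essentially the paper's argument: the mean value theorem on $I_n(x)$, the bound of the oscillation of $\log(g^j)'$ on $I_n(x)$ by the sum of the variations of $\log g'$ over the intervals $g^l(I_n(x))$, $0\le l<j$, which have disjoint interiors by Lemma~\ref{lem:disjoint} (also when $j=Q_n$), and exponentiation. The only blemish is notational: ``for all $x,y\in I_n(x)$'' uses $x$ both as the base point and as a variable. The bound you need, with $x$ an endpoint and $y$ arbitrary in $I_n(x)$, is exactly what your telescoping argument proves.
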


\begin{proof}
If $u,v\in I_n(x)$, then
\begin{align*}
 |\log(g^j)'(u)-\log(g^j)'(v)|
 &\le\sum_{l=0}^{j-1}
 |\log g'(g^l(u))-\log g'(g^l(v))|\\
 &\le\sum_{l=0}^{j-1}\Var_{g^l(I_n(x))}(\log g')\le V.
\end{align*}
The last inequality uses Lemma~\ref{lem:disjoint}; it remains valid at
$j=Q_n$. By the mean-value theorem, some $v\in I_n(x)$ satisfies
\[
 (g^j)'(v)=\frac{|g^j(I_n(x))|}{|I_n(x)|}
         =\frac{\ell_n(g^j(x))}{\ell_n(x)}.
\]
Exponentiating the preceding oscillation bound proves the result.
\end{proof}

\begin{lemma}[Variation of the return height on one interval]\label{lem:height-real}
For sufficiently large $n$,
\begin{equation}\label{eq:ell-slope}
 |\ell_n'(x)|\le\Lambda-1
\end{equation}
for all $x$. If $u\in I_n(x)$, then
\begin{equation}\label{eq:ell-comparable}
 \Lambda^{-1}\ell_n(x)\le\ell_n(u)\le\Lambda\ell_n(x).
\end{equation}
\end{lemma}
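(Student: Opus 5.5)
Since $m_n$ has constant sign, $\ell_n=\pm m_n$, so the slope bound reduces to a bound on $m_n'(x)=(g^{q_n})'(x)-1$. By Proposition~\ref{prop:denjoy-bound}, for large $n$ we have $\Lambda^{-1}\le (g^{q_n})'(x)\le\Lambda$, hence
\[
 \Lambda^{-1}-1\le m_n'(x)\le\Lambda-1 .
\]
Because $\Lambda=e^{4V}\ge1$, we get $1-\Lambda^{-1}\le\Lambda-1$. Therefore $|m_n'(x)|\le\Lambda-1$, and consequently $|\ell_n'(x)|=|m_n'(x)|\le\Lambda-1$. This proves \eqref{eq:ell-slope}.

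For the comparability \eqref{eq:ell-comparable} I would not try to integrate the slope bound, since that only gives $\ell_n(u)\le\ell_n(x)+(\Lambda-1)|u-x|\le\Lambda\ell_n(x)$ on one side. Instead I would use the conjugacy \eqref{eq:return-conjugacy}. Write $x=H(t)$ and $u=H(s)$ with $s$ lying between $t$ and $t+e_n$. Then $I_n(x)=H([t,t+e_n])$, up to orientation, and $I_n(u)=H([s,s+e_n])$. So $I_n(u)$ is contained in $I_n(x)\cup F_n(I_n(x))$, and in the same way $I_n(x)\subset F_n^{-1}(I_n(u))\cup I_n(u)$.

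Next I would compare lengths using the mean value theorem and Proposition~\ref{prop:denjoy-bound}. Since $F_n$ is a translate of $g^{q_n}$, the bound $(g^{q_n})'\le\Lambda$ gives $|F_n(I_n(x))|\le\Lambda\,\ell_n(x)$. Hence
\[
 \ell_n(u)\le\ell_n(x)+\Lambda\,\ell_n(x)=(1+\Lambda)\,\ell_n(x).
\]
This constant is slightly too large. To sharpen it, I would split the interval more carefully. The piece of $I_n(u)$ lying inside $I_n(x)$ has length $\ell_n(x)-|u-x|$. The piece lying beyond $F_n(x)$ is $F_n([x,u])$, whose length is at most $\Lambda|u-x|$. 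Adding the two pieces gives
\[
 \ell_n(u)\le\ell_n(x)+(\Lambda-1)|u-x|\le\Lambda\,\ell_n(x),
\]
using $|u-x|\le\ell_n(x)$. This is exactly the upper bound in \eqref{eq:ell-comparable}, and it agrees with what integrating the slope bound gives.

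The lower bound is the main obstacle, because integrating the slope only yields $\ell_n(u)\ge(2-\Lambda)\ell_n(x)$, which is useless. Here I would use the symmetric decomposition. The interval $I_n(x)$ is the union of $[x,u]$ and $[u,F_n(x)]$. The second piece lies inside $I_n(u)$. The first piece is $F_n^{-1}$ of $[F_n(x),F_n(u)]$, and that image also lies inside $I_n(u)$. The two image pieces have disjoint interiors, because $F_n$ is increasing and $u$ lies between $x$ and $F_n(x)$. Since $(F_n^{-1})'\le\Lambda$, this gives
\[
 \ell_n(x)\le |[u,F_n(x)]|+\Lambda\,|[F_n(x),F_n(u)]|\le\Lambda\,\ell_n(u),
\]
which is the lower bound in \eqref{eq:ell-comparable}.
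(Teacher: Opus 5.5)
Your proof is correct and is essentially the paper's argument. Both rest on the identity $\ell_n(u)=\ell_n(x)-|u-x|+|F_n(u)-F_n(x)|$ for $u\in I_n(x)$ (written in the paper as an integral of $F_n'-1$ between $x$ and $u$), combined with $\Lambda^{-1}\le F_n'\le\Lambda$ from Proposition~\ref{prop:denjoy-bound}. One correction: your remark that integration cannot give the lower bound is mistaken, since integrating the signed bound $F_n'-1\ge\Lambda^{-1}-1$, rather than $|\ell_n'|\le\Lambda-1$, gives $\ell_n(u)\ge\ell_n(x)-(1-\Lambda^{-1})|u-x|\ge\Lambda^{-1}\ell_n(x)$ at once, which is exactly what the paper does, so your detour through $F_n^{-1}$ is valid but unnecessary.
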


\begin{proof}
Proposition~\ref{prop:denjoy-bound} gives
$\Lambda^{-1}\le F_n'\le\Lambda$. Since
$\ell_n'=\operatorname{sign}(m_n)(F_n'-1)$, this implies
\eqref{eq:ell-slope}.

If $F_n(x)>x$ and $u=x+d$, $0\le d\le\ell_n(x)$, then
\[
 \ell_n(u)=\ell_n(x)+\int_x^{x+d}(F_n'(v)-1)\dd v.
\]
Its lower and upper bounds are
$\ell_n(x)+(\Lambda^{-1}-1)d$ and
$\ell_n(x)+(\Lambda-1)d$, respectively. They lie between
$\Lambda^{-1}\ell_n(x)$ and $\Lambda\ell_n(x)$.

If $F_n(x)<x$ and $u=x-d$, $0\le d\le\ell_n(x)$, then instead
\[
 \ell_n(u)=\ell_n(x)+\int_{x-d}^{x}(F_n'(v)-1)\dd v,
\]
and the same bounds apply. Thus the same comparison holds for both signs of the closest return.
\end{proof}

\section{The hyperbolic Denjoy-Yoccoz lemma}\label{sec:tube}

We use the hyperbolic form of the Denjoy-Yoccoz lemma due to the
author \cite[Sections 3-4]{PMQ}. We give the small-height version
needed here, with a bound depending on $g$.

On $\HH$ we use the curvature $-1$ hyperbolic metric
\begin{equation}\label{eq:hyperbolic}
 ds_{\HH}=\frac{|dw|}{\im w},
\end{equation}
and denote its distance by $d_{\HH}$. Choose a fixed number $c>0$ such
that
\begin{equation}\label{eq:small-c}
 c\le1,\qquad 2\tau cB\le\log(5/4).
\end{equation}
The map $g$ is fixed before $c$ is chosen. Thus no smallness assumption
on $\tau$ is required. Take $n$ sufficiently large that
\begin{equation}\label{eq:strip-height}
 t_n:=2cBM_n<\Delta.
\end{equation}
The osculating graph in external coordinates is
\begin{equation}\label{eq:gamma}
 \gamma_n(x)=x+i c\ell_n(x),\qquad x\in\R.
\end{equation}

\begin{figure}[tbp]
\centering
\begin{tikzpicture}[x=1cm,y=1cm,>=Latex,font=\small,
  declare function={qh(\x)=1.3+0.2*sin(55*\x)+0.1*cos(110*\x);}]
  \draw[->] (-.2,0) -- (13.6,0) node[below left] {$\re w$};
  \draw[->] (0,-.15) -- (0,4.0) node[above] {$\im w$};
  \coordinate (A) at (1.65,{qh(1.65)});
  \coordinate (B) at (8.30,{qh(8.30)});
  \coordinate (Z) at (8.82,{qh(8.30)+.50});
  \pgfmathsetmacro{\yb}{qh(8.30)}
  \pgfmathsetmacro{\bc}{\yb*cosh(.62)}
  \pgfmathsetmacro{\br}{\yb*sinh(.62)}
  \draw[gray!65,fill=gray!7] (8.30,\bc) circle[radius=\br];
  \draw[curveblue,thick,smooth,samples=180,domain=.20:13.1]
    plot(\x,{qh(\x)});
  \node[curveblue,anchor=west] at (10.30,1.70)
    {$\gamma_n:\ y=c\ell_n(x)$};
  \draw[densely dashed,gray] (1.65,0) -- (A);
  \draw[densely dashed,gray] (8.30,0) -- (B);
  \node[below] at (1.65,0) {$x$};
  \node[below] at (8.30,0) {$x_j=g^j(x)$};
  \draw[orbitred,thick,->] (A) .. controls (3.45,3.75) and (6.90,3.75) .. (Z);
  \node[orbitred] at (5.3,3.63) {$g^j,\quad 0\le j\le Q_n$};
  \fill[curveblue] (A) circle (1.7pt);
  \fill[curveblue] (B) circle (1.7pt);
  \fill[orbitred] (Z) circle (1.7pt);
  \node[curveblue,anchor=east] at (3.6,2.10) {$\gamma_n(x)$};
  \draw[curveblue,thin] (2.30,1.95) -- (A);
  \node[curveblue,anchor=east] at (7.55,.70) {$\gamma_n(x_j)$};
  \draw[curveblue,thin] (7.62,.74) -- (B);
  \node[orbitred,anchor=west] at (10.0,3.03) {$g^j(\gamma_n(x))$};
  \draw[orbitred,thin] (9.95,2.87) -- (Z);
  \draw[densely dashed,thin] (B) -- (Z);
  \node[anchor=west] at (10.02,2.48) {$d_{\HH}\le D$};
\end{tikzpicture}
\caption{The hyperbolic Denjoy--Yoccoz estimate. The actual iterate
$g^j(\gamma_n(x))$ lies in a hyperbolic ball of fixed radius $D$ about
$\gamma_n(g^j(x))$. The graph height is $c\ell_n(x)$; the drawing is
schematic.}
\label{fig:tube}
\end{figure}

\begin{proposition}[Small-height Denjoy--Yoccoz lemma]\label{prop:tube}
For $0\le s\le c$, set $z_s=x+i s\ell_n(x)$ and $x_j=g^j(x)$.
All iterates $g^j(z_s)$, $0\le j\le Q_n$, are defined. For every such
$j$ and $s>0$,
\begin{equation}\label{eq:vertical-expansion}
 g^j(z_s)=x_j+i s(g^j)'(x)\ell_n(x)(1+E_{j,s}),
 \qquad |E_{j,s}|\le\tfrac14.
\end{equation}
In particular,
\begin{align}
 |g^j(z_s)-x_j|&\le2sB\ell_n(x_j),\label{eq:tube-bootstrap}\\
 \frac{3s}{4B}\ell_n(x_j)
 \le\im g^j(z_s)&\le\frac{5sB}{4}\ell_n(x_j)<t_n.
 \label{eq:tube-im}
\end{align}
There is a constant $D<\infty$, independent of $n,x,j$, such that
\begin{equation}\label{eq:tube-hyperbolic}
 d_{\HH}\bigl(g^j(\gamma_n(x)),\gamma_n(g^j(x))\bigr)\le D
 \qquad(0\le j\le Q_n).
\end{equation}
\end{proposition}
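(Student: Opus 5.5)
Argue by induction on $j$, tracking the vertical displacement through the logarithmic derivative of $g$ on thin strips. Write $g^j(z_s)=g^j(x)+is\,\ell_n(x)\,w_j$ and compare $w_j$ with $(g^j)'(x)$. The induction hypothesis at step $j$ is \eqref{eq:vertical-expansion} with $|E_{j,s}|\le\tfrac14$. Combined with Lemma~\ref{lem:distortion}, $(g^j)'(x)\ell_n(x)\le B\ell_n(x_j)$, this gives $|g^j(z_s)-x_j|\le\tfrac54 sB\ell_n(x_j)\le 2sB\ell_n(x_j)$, which is \eqref{eq:tube-bootstrap}. It also gives $\tfrac{3s}{4B}\ell_n(x_j)\le \im g^j(z_s)\le\tfrac{5sB}{4}\ell_n(x_j)$. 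Since $s\le c$ and $\ell_n\le M_n$, the right side is $<2cBM_n=t_n<\Delta$, so $g^j(z_s)\in S_\Delta$ and the next iterate is defined. The base case $j=0$ is trivial, with $E_{0,s}=0$.

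For the inductive step I would not only expand $g$ at $x_j$ at each step, because the local errors must be summed over up to $Q_n$ steps. Instead I would bound the multiplicative error cumulatively. For a point $w$ on the vertical segment above $x_j$, or more generally within distance $2sB\ell_n(x_j)$ of $x_j$, \eqref{eq:tau} gives $|\log g'(w)-\log g'(x_j)|\le\tau|w-x_j|$. The key step is to write $g^{j+1}(z_s)-x_{j+1}=\int_{x_j}^{g^j(z_s)}g'(w)\dd w$ along the straight segment, which lies in $S_\Delta$ by convexity. This yields
\[
g^{j+1}(z_s)-x_{j+1}=g'(x_j)\bigl(g^j(z_s)-x_j\bigr)(1+\eta_j),\qquad |\eta_j|\le e^{\tau\cdot 2sB\ell_n(x_j)}-1 .
\]
Iterating, $g^{j}(z_s)-x_j=(g^j)'(x)\,is\ell_n(x)\prod_{l<j}(1+\eta_l)$. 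Here $|\log\prod(1+\eta_l)|\lesssim 2\tau sB\sum_{l<j}\ell_n(x_l)\le 2\tau cB\le\log(5/4)$ by \eqref{eq:length-sum} and \eqref{eq:small-c}. This bounds the product within $[4/5,5/4]$ in modulus, with a small argument, so $|E_{j,s}|\le\tfrac14$ after a modest adjustment of constants. The genuine subtlety is that $g^j(z_s)-x_j$ is complex, not purely imaginary. I therefore expect the main obstacle to be the bookkeeping: the product's modulus and argument must both be controlled so that $\im g^j(z_s)$ stays comparable to $|g^j(z_s)-x_j|$, and this must hold up to $j=Q_n$ inclusive, where Lemma~\ref{lem:distortion} still applies. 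If the $\tfrac14$ bound is tight, I would use $|e^{\zeta}-1|\le e^{|\zeta|}-1$ with $|\zeta|\le\log(5/4)$, which gives exactly $\tfrac14$.

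For \eqref{eq:tube-hyperbolic}, take $s=c$, so $\gamma_n(x)=z_c$. Both $g^j(\gamma_n(x))$ and $\gamma_n(x_j)=x_j+ic\ell_n(x_j)$ lie within Euclidean distance $O(cB\ell_n(x_j))$ of $x_j$. Their imaginary parts are at least $\tfrac{3c}{4B}\ell_n(x_j)$ and $c\ell_n(x_j)$ respectively. Two points in $\HH$ whose horizontal and vertical separations are bounded by a fixed multiple of their (comparable) heights are at bounded hyperbolic distance. An explicit constant is $D=\log(4B^2)+2B^2\cdot 4$, or any crude bound via $d_{\HH}\le|\log(\im w_1/\im w_2)|+|\re w_1-\re w_2|/\min\im$. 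The constant depends only on $B$, hence is independent of $n,x,j$.
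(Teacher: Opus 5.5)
Your proof is correct and is essentially the paper's argument: induction on $j$, the bound $|\log g'(w)-\log g'(x_l)|\le\tau|w-x_l|$ summed against $\sum_{l<j}\ell_n(x_l)\le1$ with $2\tau cB\le\log(5/4)$, and then a horizontal-plus-vertical path for the hyperbolic bound. The only variation is that you multiply one-step secant ratios $1+\eta_l$, whereas the paper bounds $(g^j)'(z_u)/(g^j)'(x)$ along the initial vertical segment and integrates, which is why it inducts simultaneously in $s$. Also, no adjustment of constants is needed. The inequality $|\prod_{l<j}(1+\eta_l)-1|\le\prod_{l<j}(1+|\eta_l|)-1\le e^{2\tau cB}-1\le\tfrac14$ gives exactly $|E_{j,s}|\le\tfrac14$. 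Then $\re(1+E_{j,s})\ge\tfrac34$ already controls the imaginary part, so the extra bookkeeping of modulus and argument you anticipated is unnecessary.
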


\begin{proof}
We prove \eqref{eq:tube-bootstrap} by induction on $j$, simultaneously
for all $s\in[0,c]$. It is immediate when $j=0$. Suppose it holds for
all earlier iterates. Their distances from their real base points are
at most $2cBM_n<\Delta$, so the next composition is defined.
The finite list of preceding compositions is holomorphic on a
neighborhood of each initial vertical segment.

Using the analytic logarithm of $g'$ and the chain rule, we have
\begin{align}
 L_j(s)&:=\sum_{l=0}^{j-1}
   \bigl(\log g'(g^l(z_s))-\log g'(x_l)\bigr),\notag\\
 e^{L_j(s)}&=\frac{(g^j)'(z_s)}{(g^j)'(x)}.
 \label{eq:log-ratio}
\end{align}
The straight segment joining $x_l$ to $g^l(z_s)$ lies in the strip.
The definition of $\tau$, the induction hypothesis, and
\eqref{eq:length-sum} therefore give
\begin{align}
 |L_j(s)|
 &\le\tau\sum_{l=0}^{j-1}|g^l(z_s)-x_l|\notag\\
 &\le2\tau sB\sum_{l=0}^{j-1}\ell_n(x_l)
 \le2\tau sB\le\log(5/4).
 \label{eq:log-bootstrap}
\end{align}
As $|e^w-1|\le e^{|w|}-1$, it follows that
\begin{equation}\label{eq:derivative-quarter}
 \left|\frac{(g^j)'(z_s)}{(g^j)'(x)}-1\right|\le\tfrac14.
\end{equation}
This estimate holds at every point of the initial vertical segment.
Integrating its derivative along that segment proves
\eqref{eq:vertical-expansion}. Lemma~\ref{lem:distortion} then gives
\[
 |g^j(z_s)-x_j|
 \le\tfrac54s(g^j)'(x)\ell_n(x)
 \le\tfrac54sB\ell_n(x_j),
\]
which is stronger than the induction claim. The induction closes up to
$j=Q_n$. Since the real derivative is positive, taking imaginary parts
in \eqref{eq:vertical-expansion} proves \eqref{eq:tube-im}.

For $s=c$, normalize by the real affine hyperbolic isometry
\[
 w\longmapsto\frac{w-x_j}{c\ell_n(x_j)}.
\]
The model point $\gamma_n(x_j)$ becomes $i$. The image $v$ of
$g^j(\gamma_n(x))$ satisfies
\[
 |\re v|\le B/4,\qquad
 \frac3{4B}\le\im v\le\frac{5B}{4}.
\]
A horizontal segment followed by a vertical segment has hyperbolic
length at most
\[
 \frac{|\re v|}{\im v}+|\log\im v|
 \le \frac{B^2}{3}+\log(4B/3).
\]
Thus one may take
\begin{equation}\label{eq:D}
 D=\frac{B^2}{3}+\log(4B/3).
\end{equation}
\end{proof}

\begin{remark}
The constant $D$ depends on $g$, but not on $n$, $x$, or $j$. We need a
bounded hyperbolic error, not an arbitrarily small one. This is why the
small-height form above suffices.
\end{remark}

\section{Quasi-invariant curves and orbit matching}\label{sec:graph-match}

The quotient graph $\gamma_n(\R)/\Z$ is an analytic Jordan curve in the
cylinder. Let $d_{\HH/\Z}$ denote the quotient distance
\[
 d_{\HH/\Z}(u,v)=\inf_{k\in\Z}d_{\HH}(u,v+k).
\]

\begin{lemma}[Bounded graph arcs]\label{lem:graph-arcs}
The graph arc of $\gamma_n$ over any interval $I_n(x)$ has hyperbolic
length at most
\begin{equation}\label{eq:L}
 L=\frac{\Lambda}{c}\sqrt{1+c^2(\Lambda-1)^2}.
\end{equation}
If $\dist_{\R/\Z}(t,s)<3\delta_n$, then
\begin{equation}\label{eq:phase-graph}
 d_{\HH/\Z}\bigl(\gamma_n(H(t)),\gamma_n(H(s))\bigr)\le4L.
\end{equation}
\end{lemma}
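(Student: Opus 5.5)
For the first claim I will parametrize the graph arc over $I_n(x)$ by $u\mapsto\gamma_n(u)=u+ic\ell_n(u)$ and bound its hyperbolic length pointwise. The Euclidean speed is $\sqrt{1+c^2\ell_n'(u)^2}\le\sqrt{1+c^2(\Lambda-1)^2}$ by \eqref{eq:ell-slope}. The height is $c\ell_n(u)\ge c\Lambda^{-1}\ell_n(x)$ by \eqref{eq:ell-comparable}. Dividing and integrating over an interval of Euclidean length $\ell_n(x)$ gives
\[
 \int_{I_n(x)}\frac{\sqrt{1+c^2\ell_n'(u)^2}}{c\ell_n(u)}\dd u
 \le\ell_n(x)\cdot\frac{\Lambda\sqrt{1+c^2(\Lambda-1)^2}}{c\ell_n(x)}=L.
\]
I will apply Lemma~\ref{lem:height-real} only for sufficiently large $n$, as that lemma requires.

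For \eqref{eq:phase-graph}, I will move from $t$ to $s$ in steps of size $|e_n|=\delta_n$. After a translation of $s$ by an integer (allowed in $d_{\HH/\Z}$), assume $|t-s|<3\delta_n$. The plan is to cover the segment between $t$ and $s$ by at most four consecutive rotation intervals of the form $[u,u+e_n]$. Set $u_k=t+k e_n$ with the sign of $k$ chosen so that $u_k$ moves toward $s$. Since $|s-t|<3\delta_n$, the point $s$ lies in one of the intervals with endpoints $u_k,u_{k+1}$ for some $0\le k\le2$. By \eqref{eq:return-conjugacy}, $H(u_{k+1})=F_n(H(u_k))$, so the image under $H$ of that interval is exactly $I_n(H(u_k))$. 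I will then join $\gamma_n(H(t))$ to $\gamma_n(H(s))$ by concatenating the graph arcs over $I_n(H(u_0)),\dots,I_n(H(u_{k-1}))$ and the partial arc over $I_n(H(u_k))$ up to $H(s)$. Monotonicity of $H$ guarantees that $H(s)$ lies in $I_n(H(u_k))$ between its endpoints. This uses at most three full arcs plus one partial arc, so the total length is at most $4L$, and since the hyperbolic distance is bounded by the length of any joining path, the bound follows.

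The only delicate point is orientation. When $e_n$ has the sign opposite to $s-t$, I will step by $-e_n$ instead, using the intervals $[u-|e_n|,u]$, whose $H$-images are $I_n(H(u)-\text{shift})$, that is $I_n$ based at $H(u-|e_n|)$. The same count applies. I do not expect any real obstacle here; the substance is already in the uniform slope and comparability bounds of Lemma~\ref{lem:height-real}.
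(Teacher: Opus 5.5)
Your proposal is correct and follows essentially the same route as the paper: the same pointwise bound on the hyperbolic speed from \eqref{eq:ell-slope} and \eqref{eq:ell-comparable}, and the same covering of the phase segment by consecutive intervals of length $\delta_n$, transported by \eqref{eq:return-conjugacy} to return intervals $I_n(\cdot)$, with orientation handled by noting that each such interval has $H$-image of the form $I_n(H(v))$ for one of its endpoints $v$. Your count is slightly generous: with $0\le k\le2$ you use at most two full arcs plus one partial arc, so the path actually has length at most $3L$, which only strengthens the stated bound $4L$.
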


\begin{proof}
By Lemma~\ref{lem:height-real}, the length of the first arc is at most
\begin{align*}
 \int_{I_n(x)}
 \frac{\sqrt{1+c^2\ell_n'(u)^2}}{c\ell_n(u)}\dd u
 &\le
 \frac{\sqrt{1+c^2(\Lambda-1)^2}}
      {c\Lambda^{-1}\ell_n(x)}\,|I_n(x)|=L.
\end{align*}

Choose representatives of $t,s$ with $|t-s|<3\delta_n$.
In phase coordinates, a return interval is an interval of length
$\delta_n$ with signed displacement $e_n$. Starting at $t$, the segment
to $s$ is covered by at most four consecutive such intervals, with the
last possibly shortened. This works for either sign of $e_n$; traversing
an interval backwards does not change its length. Equation
\eqref{eq:return-conjugacy} transports these intervals by $H$ to
consecutive intervals of the form $I_n(u)$. Their graph arcs give the
claimed path of length at most $4L$.
\end{proof}

\begin{proposition}[Quasi-invariance and closest returns]\label{prop:quasi}
For $0\le j\le Q_n$, the hyperbolic Hausdorff distance in the cylinder
between $g^j(\gamma_n)$ and $\gamma_n$ is at most $D$. Moreover,
\begin{equation}\label{eq:closest-hyperbolic}
 d_{\HH/\Z}\bigl(g^{q_n}(\gamma_n(x)),\gamma_n(x)\bigr)\le D+L
\end{equation}
for every $x$.
\end{proposition}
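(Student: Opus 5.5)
The argument has two parts: a Hausdorff bound obtained from the tube estimate of Proposition~\ref{prop:tube}, and a closest-return bound obtained by combining that estimate with Lemma~\ref{lem:graph-arcs}.

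\textbf{Hausdorff bound.} Fix $0\le j\le Q_n$. Every point of $g^j(\gamma_n)$ has the form $g^j(\gamma_n(x))$. By \eqref{eq:tube-hyperbolic} it lies within hyperbolic distance $D$ of $\gamma_n(g^j(x))$, which is a point of $\gamma_n$. Distances in $\HH$ dominate the quotient distances $d_{\HH/\Z}$, so one half of the Hausdorff estimate follows.

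For the other half, take an arbitrary point $\gamma_n(y)$ of $\gamma_n$. Since $g^j$ is an increasing degree-one lift, it is a homeomorphism of $\R$, so we may set $x=g^{-j}(y)$. Then \eqref{eq:tube-hyperbolic} gives
\[
 d_{\HH}\bigl(\gamma_n(y),g^j(\gamma_n(x))\bigr)\le D .
\]
The bound is uniform in $x$ and $j$. The $1$-periodicity of $\gamma_n$ (because $\ell_n$ is periodic) and the commutation of $g^j$ with integer translations show that everything descends to the cylinder. This gives Hausdorff distance at most $D$.

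\textbf{Closest returns.} Apply \eqref{eq:tube-hyperbolic} with $j=q_n$; this is allowed because $q_n\le q_{n+1}=Q_n$. It gives
\[
 d_{\HH}\bigl(g^{q_n}(\gamma_n(x)),\gamma_n(g^{q_n}(x))\bigr)\le D .
\]
Next, $g^{q_n}(x)=F_n(x)+p_n$, and $\gamma_n$ commutes with integer translation, so $\gamma_n(g^{q_n}(x))=\gamma_n(F_n(x))+p_n$. In the quotient the point $\gamma_n(g^{q_n}(x))$ is therefore identified with $\gamma_n(F_n(x))$.

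The points $\gamma_n(F_n(x))$ and $\gamma_n(x)$ are the two endpoints of the graph arc of $\gamma_n$ over the return interval $I_n(x)$. By Lemma~\ref{lem:graph-arcs} this arc has hyperbolic length at most $L$. The triangle inequality in $d_{\HH/\Z}$ then yields \eqref{eq:closest-hyperbolic}.

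\textbf{Main point of care.} The only delicate step is the bookkeeping of integer translates in passing from $\HH$ to the cylinder. One must check that $\gamma_n(u+k)=\gamma_n(u)+k$, which holds by the periodicity of $\ell_n$. One must also check that the quotient distance is a genuine pseudometric satisfying the triangle inequality. It does, because integer translations are hyperbolic isometries, so $d_{\HH/\Z}(u,v)=\inf_{k,k'}d_{\HH}(u+k',v+k)$. Beyond this, nothing new is needed: the estimates come directly from Proposition~\ref{prop:tube} and Lemma~\ref{lem:graph-arcs}.
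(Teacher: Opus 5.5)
Your proof is correct and follows the paper's argument: the tube estimate \eqref{eq:tube-hyperbolic}, together with the surjectivity of $g^j$ on $\R$, gives both directed Hausdorff bounds, and the closest-return bound combines three facts — the tube estimate at $j=q_n\le Q_n$, the identity $\gamma_n(g^{q_n}(x))=\gamma_n(F_n(x))+p_n$, and the length-$L$ graph arc over $I_n(x)$ from Lemma~\ref{lem:graph-arcs}. Your extra bookkeeping on integer translates and on the triangle inequality for $d_{\HH/\Z}$ only makes explicit what the paper leaves implicit.
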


\begin{proof}
The tube estimate pairs $g^j(\gamma_n(x))$ with
$\gamma_n(g^j(x))$ within distance $D$. Since $g^j$ maps $\R$ onto
$\R$, this also proves the reverse directed Hausdorff bound.
For the last assertion, note that
\[
 \gamma_n(g^{q_n}(x))=\gamma_n(F_n(x))+p_n.
\]
The graph points over $x$ and $F_n(x)$ are joined by a single arc from
Lemma~\ref{lem:graph-arcs}. Combine that arc with the tube bound.
\end{proof}

\begin{figure}[tbp]
\centering
\begin{tikzpicture}[x=1cm,y=1cm,>=Latex,font=\small]
  \node[font=\small\bfseries] at (3.15,4.05) {Rotation coordinates};
  \draw[->,gray] (.35,2.85) -- (6.0,2.85);
  \draw[->,gray] (.35,1.48) -- (6.0,1.48);
  \node[anchor=east] at (.37,2.85) {$t$};
  \node[anchor=east] at (.37,1.48) {$s$};
  \foreach \a/\b in {.85/1.12,1.87/2.03,2.83/3.10,3.94/4.12,5.03/5.20} {
    \draw[gray!65,->] (\a,2.77) -- (\b,1.57);
    \fill[curveblue] (\a,2.85) circle (1.6pt);
    \fill[orbitred] (\b,1.48) circle (1.6pt);
  }
  \node[curveblue] at (3.1,3.33) {$t+j\beta$};
  \node[orbitred] at (3.1,.99) {$s+\sigma(j)\beta$};
  \node[align=center,font=\footnotesize] at (3.2,.22)
    {$\dist_{\R/\Z}(t+j\beta,s+\sigma(j)\beta)<3\delta_n$};
  \node[font=\small\bfseries] at (10.00,4.05) {External coordinates};
  \coordinate (M1) at (7.25,1.10);
  \coordinate (M2) at (12.35,1.10);
  \coordinate (Z1) at (7.68,2.95);
  \coordinate (Z2) at (11.89,2.95);
  \draw[curveblue,thick] (6.95,1.16)
     .. controls (7.05,1.13) and (7.15,1.11) .. (M1)
     .. controls (8.20,.75) and (8.30,1.36) .. (9.70,1.17)
     .. controls (10.80,.97) and (11.55,.90) .. (M2)
     .. controls (12.48,1.12) and (12.60,1.15) .. (12.70,1.18);
  \draw[gray!75] (M1) -- (Z1) node[midway,left,text=black] {$\le D$};
  \draw[gray!75] (M2) -- (Z2) node[midway,right,text=black] {$\le D$};
  \draw[densely dashed,thick] (Z1) -- (Z2)
     node[midway,above=3pt] {$\le 2D+4L$};
  \draw[decorate,decoration={brace,mirror,amplitude=4pt}]
     (7.35,.67) -- (12.25,.67) node[midway,below=5pt] {$\le 4L$};
  \fill[curveblue] (M1) circle (1.7pt);
  \fill[curveblue] (M2) circle (1.7pt);
  \fill[orbitred] (Z1) circle (1.7pt);
  \fill[orbitred] (Z2) circle (1.7pt);
  \node[orbitred,anchor=east] at (7.58,3.36) {$z_j$};
  \node[orbitred,anchor=west] at (12.00,3.36) {$w_{\sigma(j)}$};
  \node[curveblue,anchor=east] at (7.00,.99) {$a_j$};
  \node[curveblue,anchor=west] at (12.62,.98) {$b_{\sigma(j)}$};
\end{tikzpicture}
\caption{Permutation matching. The left diagram represents the matching
of the two phase sets, displayed in their circular order. On the right,
$a_j=\gamma_n(H(t+j\beta))$ and
$b_{\sigma(j)}=\gamma_n(H(s+\sigma(j)\beta))$ are model points;
$z_j=g^j(\gamma_n(H(t)))$ and
$w_{\sigma(j)}=g^{\sigma(j)}(\gamma_n(H(s)))$ are actual iterates.
The bounds $D$, $4L$, and $D$ give $C=2D+4L$. All distances on the
right are hyperbolic distances in $\HH/\Z$. The diagrams are schematic.}
\label{fig:matching}
\end{figure}

\begin{proposition}[Hyperbolic permutation matching]\label{prop:hyperbolic-match}
Set
\begin{equation}\label{eq:Cmatch}
 C=2D+4L.
\end{equation}
For every $x,y\in\R$ and every sufficiently large $n$, some permutation
$\sigma$ of $\{0,\ldots,Q_n-1\}$ satisfies
\begin{equation}\label{eq:hyperbolic-match}
 d_{\HH/\Z}\bigl(g^j(\gamma_n(x)),
                 g^{\sigma(j)}(\gamma_n(y))\bigr)\le C
 \quad(0\le j<Q_n).
\end{equation}
All the points in this formula have imaginary part at most $t_n$.
\end{proposition}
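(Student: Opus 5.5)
The matching is transferred from phase coordinates to external coordinates via the conjugacy $H$, and the error is then controlled by the triangle inequality in $d_{\HH/\Z}$, exactly as in Figure~\ref{fig:matching}. Since $H$ is an increasing degree-one homeomorphism of $\R$, choose $t,s\in\R$ with $H(t)=x$ and $H(s)=y$. Lemma~\ref{lem:rotation-match} supplies a permutation $\sigma$ of $\{0,\ldots,Q_n-1\}$ with
\[
 \dist_{\R/\Z}\bigl(t+j\beta,s+\sigma(j)\beta\bigr)<3\delta_n\qquad(0\le j<Q_n).
\]
This $\sigma$ will be the permutation in \eqref{eq:hyperbolic-match}.

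For fixed $j$, set $z_j=g^j(\gamma_n(x))$ and $w_{\sigma(j)}=g^{\sigma(j)}(\gamma_n(y))$. By \eqref{eq:H}, the model points are
\[
 a_j=\gamma_n(g^j(x))=\gamma_n(H(t+j\beta)),\qquad
 b_{\sigma(j)}=\gamma_n(g^{\sigma(j)}(y))=\gamma_n(H(s+\sigma(j)\beta)).
\]
Because $0\le j,\sigma(j)<Q_n$, Proposition~\ref{prop:tube}, estimate \eqref{eq:tube-hyperbolic}, gives $d_{\HH}(z_j,a_j)\le D$ and $d_{\HH}(w_{\sigma(j)},b_{\sigma(j)})\le D$. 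These bounds hold a fortiori for the quotient distance $d_{\HH/\Z}$. Applying \eqref{eq:phase-graph} of Lemma~\ref{lem:graph-arcs} to the phases $t+j\beta$ and $s+\sigma(j)\beta$ gives $d_{\HH/\Z}(a_j,b_{\sigma(j)})\le4L$. Here one uses only that $\gamma_n\circ H$ is compatible with translation by integers: $H(t+k)=H(t)+k$ and $\ell_n$ is $1$-periodic, so $\gamma_n(u+k)=\gamma_n(u)+k$. Hence the left side of \eqref{eq:phase-graph} depends only on the phases modulo $\Z$. The triangle inequality for the quotient pseudo-distance (a genuine distance on $\HH/\Z$, since $\Z$ acts by isometries) then yields the bound $2D+4L=C$.

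The imaginary-part assertion follows from \eqref{eq:tube-im} with $s=c$. It gives $\im g^j(\gamma_n(x))\le\frac{5cB}{4}\ell_n(x_j)\le\frac{5cB}{4}M_n<2cBM_n=t_n$, and the same bound holds for the iterates of $\gamma_n(y)$.

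The hypothesis ``sufficiently large $n$'' is needed so that the earlier statements are in force: \eqref{eq:strip-height}, Lemma~\ref{lem:height-real} (used inside Lemma~\ref{lem:graph-arcs}), and $\delta_n<1/2$. The only step where I expect any care is the invocation of Lemma~\ref{lem:graph-arcs}. One must check that its covering argument, with at most four consecutive return intervals, really applies to arbitrary phases at circle distance $<3\delta_n$ after choosing suitable real representatives. That lemma already asserts this, so the proposition is essentially a bookkeeping combination of the tube estimate and the phase matching.
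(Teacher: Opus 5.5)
Your proof is correct and follows the paper's argument exactly. You write $x=H(t)$, $y=H(s)$ and take the permutation from Lemma~\ref{lem:rotation-match}; you bound the model points $\gamma_n(H(t+j\beta))$ and $\gamma_n(H(s+\sigma(j)\beta))$ against each other by $4L$ via \eqref{eq:phase-graph}, and each actual iterate against its model point by $D$ via \eqref{eq:tube-hyperbolic}; you then conclude with the triangle inequality, and use \eqref{eq:tube-im} for the heights. Your extra remarks on the integer-periodicity of $\gamma_n\circ H$ and on the quotient distance satisfying the triangle inequality are accurate bookkeeping that the paper leaves implicit.
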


\begin{proof}
Write $x=H(t)$ and $y=H(s)$ and choose the permutation of
Lemma~\ref{lem:rotation-match}. The corresponding model points are
\[
 \gamma_n(H(t+j\beta)),\qquad
 \gamma_n(H(s+\sigma(j)\beta)).
\]
They lie within distance $4L$ by
\eqref{eq:phase-match} and \eqref{eq:phase-graph}.
Each actual orbit point lies within distance $D$ of its model point,
by Proposition~\ref{prop:tube}. The triangle inequality gives
\eqref{eq:hyperbolic-match}. The height statement follows from
\eqref{eq:tube-im}.
\end{proof}

\section{The exterior conformal map}\label{sec:boundary}

We now transfer the estimates in the hyperbolic metric
to the dynamical plane. A bounded
hyperbolic displacement near the real axis has an image of small
Euclidean diameter under $\Psi$. The following finite-area argument
gives the required uniform statement.

\begin{lemma}[Bounded and shrinking exterior collars]\label{lem:collar}
For every sufficiently small fixed $a>0$, the set
\[
 \Psi\bigl(\{0<\im w\le a\}/\Z\bigr)
\]
is bounded. Moreover,
\begin{equation}\label{eq:collar-shrink}
 \lim_{t\downarrow0}\sup_{0<\im w\le t}\dist(\Psi(w),K)=0.
\end{equation}
\end{lemma}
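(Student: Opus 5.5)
Both assertions come from the correspondence between the exterior disk and $\Chat\setminus K$ under $h$, since $\Psi(w)=h(e^{-2\pi i w})$ and $\{0<\im w\le a\}/\Z$ maps onto the annulus $A_a=\{1<|\zeta|\le e^{2\pi a}\}$.

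\emph{Boundedness.} The map $h$ is a conformal bijection of $\Chat\setminus\overline\D$ onto $\Chat\setminus K$ with $h(\infty)=\infty$. For fixed $a>0$, the preimage $h^{-1}(\{|z|\le R\}\setminus K)$ contains $A_a$ as soon as $R$ is large. Equivalently, $h$ restricted to the compact circle $\{|\zeta|=e^{2\pi a}\}$ is bounded, say by $R_a$. The image $h(A_a)$ is the region of $\Chat\setminus K$ bounded by $K$ and the Jordan curve $h(\{|\zeta|=e^{2\pi a}\})$; this curve separates $K$ from $\infty$, so $h(A_a)$ is contained in the bounded complementary component of that curve, hence in $\{|z|\le R_a\}$. (Alternatively, use the maximum principle on $h$ over the annulus, noting that near $|\zeta|=1$ the values of $h$ accumulate only on $K$, which is bounded.) The restriction ``sufficiently small $a$'' is harmless.

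\emph{Shrinking.} I will argue by contradiction. Suppose there are $\eta>0$ and $w_k$ with $\im w_k\downarrow0$ and $\dist(\Psi(w_k),K)\ge\eta$. By boundedness, after passing to a subsequence $\Psi(w_k)\to z_*$ with $z_*\notin K$, so $z_*\in\C\setminus K$. Then $h^{-1}$ is continuous at $z_*$ and gives $h^{-1}(\Psi(w_k))=e^{-2\pi i w_k}\to h^{-1}(z_*)$, a point with modulus strictly greater than $1$. But $|e^{-2\pi i w_k}|=e^{2\pi\im w_k}\to1$, a contradiction. This is simply the properness of the homeomorphism $h$: the complements of compact subsets of the exterior disk correspond to neighborhoods of $K$. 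The finite-area argument mentioned in the text is not needed for this qualitative statement.

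The only delicate point is justifying boundedness without assuming any extension of $h$ to the circle. The separating-curve argument handles this, since the region of $\Chat\setminus K$ between $K$ and $h(\{|\zeta|=e^{2\pi a}\})$ is exactly $h(A_a)$ by connectedness and bijectivity.
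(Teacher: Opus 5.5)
Your proposal is correct and follows the paper's own proof essentially step for step. For boundedness you use the Jordan curve $h(\{|\zeta|=e^{2\pi a}\})$ enclosing the image of the closed annulus, and your remark about connectedness and bijectivity fills in a detail the paper leaves implicit. For the shrinking statement you argue by contradiction: a limit point lies in $\C\setminus K$, $h^{-1}$ is continuous there, and this contradicts $|e^{-2\pi i w_k}|=e^{2\pi\im w_k}\to1$.
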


\begin{proof}
The image by $h$ of the circle $|\zeta|=e^{2\pi a}$ is a Jordan curve
surrounding $K$. The image of $1<|\zeta|\le e^{2\pi a}$ is in its bounded
closed interior, proving boundedness. Equivalently, boundedness follows
from the Laurent expansion of $h$ and its inverse at infinity.

If \eqref{eq:collar-shrink} failed, there would be $w_k$ with
$\im w_k\to0$ and $\Psi(w_k)$ staying a positive distance from $K$.
By boundedness, a subsequence would converge to a finite point of
$\C\setminus K$. The inverse conformal map $h^{-1}$ is continuous there,
so the moduli $|e^{-2\pi i w_k}|$ would converge to a number strictly
larger than $1$. But those moduli are $e^{2\pi\im w_k}\to1$, a
contradiction.
\end{proof}

\begin{lemma}[Vanishing derivative at hyperbolic boundary scale]\label{lem:little-bloch}
Define, for sufficiently small $t>0$,
\[
 \eta(t)=\sup_{0<\im w\le t}(\im w)|\Psi'(w)|.
\]
Then
\begin{equation}\label{eq:little-bloch}
 \eta(t)\longrightarrow0\qquad(t\downarrow0).
\end{equation}
\end{lemma}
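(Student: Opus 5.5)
We argue by contradiction with a finite-area (normal families) argument, as announced at the start of Section~\ref{sec:boundary}. If \eqref{eq:little-bloch} failed, there would be $\epsilon>0$ and points $w_k$ with $y_k:=\im w_k\downarrow0$ and $y_k|\Psi'(w_k)|\ge\epsilon$. Using $1$-periodicity we may assume $0\le\re w_k<1$.

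\textbf{Rescaling.} Consider the rescaled maps
\[
 \Phi_k(v)=\Psi(\re w_k+y_k v),\qquad v\in\HH .
\]
Each $\Phi_k$ is injective on the disk $D_0=\{|v-i|<1/2\}$, since $y_k<1/2$ for large $k$ and the disk then projects injectively to $\HH/\Z$. For $v\in D_0$, the point $\re w_k+y_kv$ has imaginary part at most $2y_k$. By Lemma~\ref{lem:collar}, the maps $\Phi_k$ are therefore uniformly bounded on $D_0$, with values in a fixed bounded set $W$. We also have $\Phi_k'(i)=y_k\Psi'(w_k)$, so $|\Phi_k'(i)|\ge\epsilon$.

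\textbf{Area and Koebe.} Since $\Phi_k$ is univalent, Koebe's $1/4$ theorem shows that $\Phi_k(D_0)$ contains a Euclidean disk of radius $\tfrac14\cdot\tfrac12|\Phi_k'(i)|\ge\epsilon/8$. So each image has area at least $\pi\epsilon^2/64$. The images are $\Psi(U_k)$, where
\[
 U_k=\{\re w_k+y_kv : v\in D_0\}\subset\{y_k/2<\im w<3y_k/2\}.
\]
Pass to a subsequence with $y_{k+1}<y_k/3$. Then the horizontal bands containing the $U_k$ are pairwise disjoint. Each $U_k$ projects injectively to $\HH/\Z$ and $\Psi$ is injective on $\HH/\Z$, so the sets $\Psi(U_k)$ are pairwise disjoint. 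They all lie in the bounded set $W$, which has finite area. This contradicts their having a uniform positive lower bound on area, and the contradiction proves \eqref{eq:little-bloch}.

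\textbf{Main obstacle.} The delicate point is the disjointness bookkeeping. We must make sure that the disks $U_k$ do not overlap modulo $\Z$, which holds since their width is $y_k<1$. We must also use that $\Psi$ is a bijection from $\HH/\Z$ onto $\C\setminus K$, so disjoint sets in the cylinder have disjoint images.

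An alternative route, without Koebe, is to write the area of $\Psi(U_k)$ as $\int_{U_k}|\Psi'|^2$ and apply the sub-mean-value property of $|\Psi'|^2$ on $U_k$. This gives
\[
 \operatorname{area}\Psi(U_k)\ge \pi (y_k/2)^2|\Psi'(w_k)|^2\ge \pi\epsilon^2/4,
\]
with the same conclusion. I would use this version, since it is more elementary.
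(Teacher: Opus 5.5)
Your proof is correct and uses the same two ingredients as the paper: the finite area of the bounded exterior collar, and the sub-mean-value inequality for $|\Psi'|^2$ on the disk $B(w,y/2)$ (your Koebe variant works equally well). The paper argues directly rather than by contradiction with disjoint disks: it notes that $\mathcal E(s)=\int_0^s\int_0^1|\Psi'(x+iy)|^2\,dx\,dy\to0$ and obtains the explicit modulus $\eta(t)\le\sqrt{(8/\pi)\,\mathcal E(2t)}$, which it reuses in Section~\ref{sec:audit}, whereas your argument gives only the qualitative limit, which is all the lemma asserts.
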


\begin{proof}
Choose $a<1/4$. Since $\Psi$ is conformal and one-to-one modulo $\Z$,
the change-of-variables formula and Lemma~\ref{lem:collar} give
\begin{equation}\label{eq:finite-area}
 \mathcal E(a):=\int_0^a\int_0^1|\Psi'(x+iy)|^2\dd x\dd y<\infty.
\end{equation}
Indeed this integral is the area of its bounded image collar; no part
of $K$ is included in that area computation. Absolute continuity of the
Lebesgue integral gives $\mathcal E(s)\to0$ as $s\downarrow0$.

Let $w=x+iy$, with $2y<a$. The disk $B(w,y/2)$ lies in
$\{0<\im u<2y\}$. The submean inequality for the subharmonic function
$|\Psi'|^2$ gives
\[
 |\Psi'(w)|^2\le\frac4{\pi y^2}
                  \int_{B(w,y/2)}|\Psi'(u)|^2\dd A(u).
\]
By periodicity, this last integral is at most $2\mathcal E(2y)$;
the disk meets at most two adjacent fundamental strips, so the bound
remains valid when it crosses a period boundary. Hence
\begin{equation}\label{eq:little-bloch-bound}
 y^2|\Psi'(x+iy)|^2\le\frac8\pi\mathcal E(2y),\qquad
 \eta(t)\le\sqrt{\frac8\pi\mathcal E(2t)}.
\end{equation}
This proves the claim.
\end{proof}

\begin{lemma}[Bounded hyperbolic distance becomes uniformly small]\label{lem:metric-conversion}
For each fixed $A<\infty$, there is a function $\omega_A(t)\to0$ as
$t\downarrow0$ such that
\begin{equation}\label{eq:metric-conversion}
 |\Psi(u)-\Psi(v)|\le\omega_A(t)
\end{equation}
whenever
\[
 d_{\HH/\Z}(u,v)\le A,\qquad
 0<\im u,\im v\le t.
\]
For sufficiently small $t$, one may take
\begin{equation}\label{eq:omega}
 \omega_A(t)=A\eta(e^At).
\end{equation}
\end{lemma}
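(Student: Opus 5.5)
Fix $A<\infty$ and consider $u,v\in\HH$ with $d_{\HH/\Z}(u,v)\le A$ and $0<\im u,\im v\le t$. By $1$-periodicity of $\Psi$ I may replace $v$ by a translate $v+k$ without changing $\Psi(v)$. Then I may assume $d_{\HH}(u,v)\le A'$ for any $A'>A$; when the infimum defining $d_{\HH/\Z}$ is attained, $A'=A$. Attainment does hold, since only finitely many translates lie within a bounded hyperbolic distance. Join $u$ to $v$ by a hyperbolic geodesic arc $\alpha$ in $\HH$ of length $d_{\HH}(u,v)\le A$. The plan is to integrate $\Psi'$ along $\alpha$ and write the Euclidean length element as $|dw|=\im w\,ds_{\HH}$. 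This gives
\[
 |\Psi(u)-\Psi(v)|\le\int_\alpha|\Psi'(w)|\,|dw|
 =\int_\alpha(\im w)|\Psi'(w)|\,ds_{\HH}
 \le A\sup_{w\in\alpha}(\im w)|\Psi'(w)|.
\]

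The key step is to control how high the arc $\alpha$ can go, so that the supremum is bounded by $\eta$ at a slightly larger height. For any $w\in\HH$ one has $|\log\im w-\log\im u|\le d_{\HH}(u,w)$. This holds because the projection $w\mapsto\log\im w$ is $1$-Lipschitz for $ds_{\HH}$, since $|d\log\im w|\le|dw|/\im w$. Every point $w\in\alpha$ satisfies $d_{\HH}(u,w)\le A$, hence $\im w\le e^A\im u\le e^At$. It follows that
\[
 \sup_{w\in\alpha}(\im w)|\Psi'(w)|\le\eta(e^At).
\]
This requires $e^At$ to be within the range where $\eta$ is defined, which is what the clause ``for sufficiently small $t$'' covers. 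Combining the two displays gives $|\Psi(u)-\Psi(v)|\le A\eta(e^At)$, which is \eqref{eq:omega}.

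Finally, $\omega_A(t)=A\eta(e^At)\to0$ as $t\downarrow0$ by Lemma~\ref{lem:little-bloch}, since $e^At\downarrow0$ as well. The only point needing care is the reduction from the quotient distance to an actual path in $\HH$. If attainment of the infimum is not used, I would take a near-optimal translate with $d_{\HH}(u,v+k)\le A+\epsilon$ and let $\epsilon\to0$. This yields the bound $(A+\epsilon)\eta(e^{A+\epsilon}t)$, and monotonicity of $\eta$ in $t$ then gives the stated form in the limit, or one may simply define $\omega_A$ with $A+1$ in place of $A$. No step is a serious obstacle; the essential observation is the height control along hyperbolic geodesics.
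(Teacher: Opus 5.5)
Your proposal is correct and follows the paper's proof essentially step for step. Both arguments attain the quotient infimum by an integer translate, join the points by a hyperbolic geodesic of length at most $A$, and use $|d\log\im w|\le ds_{\HH}$ to keep the geodesic below height $e^At$. Both then integrate $|\Psi'(w)|\,|dw|=(\im w)|\Psi'(w)|\,ds_{\HH}$ along the geodesic to obtain $A\eta(e^At)$.
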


\begin{proof}
The infimum defining the quotient distance is attained: for fixed
$u,v$, the distances $d_{\HH}(u,v+k)$ tend to infinity as $|k|\to\infty$.
Choose the corresponding integer translate of $v$. Periodicity leaves
its image under $\Psi$ unchanged.

Join the chosen lifts by a hyperbolic geodesic of length at most $A$.
Along any path parameterized by hyperbolic arclength,
$|d\log(\im w)|\le ds_{\HH}$. Thus this geodesic stays below height
$e^At$. Using Lemma~\ref{lem:little-bloch},
\begin{align*}
 |\Psi(u)-\Psi(v)|
 &\le\int|\Psi'(w)|\,|dw|\\
 &\le\eta(e^At)\int\frac{|dw|}{\im w}
 \le A\eta(e^At).
\end{align*}
\end{proof}

\begin{remark}
Neither local connectivity nor a boundary extension of $\Psi$ was used.
The quantity that vanishes is the area of a \emph{shrinking exterior
collar}, not the area of $K$. The argument therefore makes no
zero-area assumption about the hedgehog.
\end{remark}

\section{Osculating curves in the dynamical plane}\label{sec:physical}

Define
\begin{equation}\label{eq:physical-curve}
 \Gamma_n=\{\Psi(\gamma_n(x)):x\in\R/\Z\},
\end{equation}
and let $\Omega_n$ be its bounded Jordan interior.

\begin{lemma}[The enclosed domain]\label{lem:enclosed}
The curve $\Gamma_n$ is a real-analytic Jordan curve, $K\subset\Omega_n$,
and
\begin{equation}\label{eq:enclosed}
 \Omega_n\setminus K
 =\Psi\bigl(\{x+iy:0<y<c\ell_n(x)\}/\Z\bigr).
\end{equation}
The closed domains $\overline{\Omega_n}$ lie in every prescribed open
neighborhood of $K$ for all sufficiently large $n$.
\end{lemma}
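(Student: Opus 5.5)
The plan is to work entirely in the cylinder $\HH/\Z$ and transport everything through the conformal bijection $\Psi:\HH/\Z\to\C\setminus K$.

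\textbf{Step 1: $\Gamma_n$ is a real-analytic Jordan curve.} Since $\ell_n$ is positive, $1$-periodic and real analytic, the map $x\mapsto\gamma_n(x)=x+ic\ell_n(x)$ descends to a real-analytic embedding of $\R/\Z$ into $\HH/\Z$. Its image is the graph of a positive function over the circle. By \eqref{eq:strip-height}, its height is at most $cM_n<t_n<\Delta$, so it stays in the thin collar. Composing with the injective holomorphic map $\Psi$ gives a real-analytic Jordan curve $\Gamma_n$ in $\C\setminus K$.

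\textbf{Step 2: identify the complementary components.} The graph separates the cylinder $\HH/\Z$ into two annular pieces:
\[
U_n=\{0<y<c\ell_n(x)\}/\Z,\qquad W_n=\{y>c\ell_n(x)\}/\Z.
\]
The upper piece $W_n$ contains all points of large imaginary part. Under $\Psi$, large imaginary part corresponds to $|e^{-2\pi i w}|\to\infty$, hence to a neighborhood of $\infty$ via $h(\infty)=\infty$. So $\Psi(W_n)$ is a connected unbounded set disjoint from $\Gamma_n$, and it lies in the unbounded component of $\C\setminus\Gamma_n$.

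Now consider $\Psi(U_n)\cup K$. The set $\Psi(U_n)$ is connected, and its closure contains $K$: points with $\im w\to0$ accumulate only on $K$ by \eqref{eq:collar-shrink}, and they accumulate on all of $\partial K=K$ (the hedgehog is full; if it had interior, the accumulation set would still be $\partial K$). The set $K$ itself is connected and disjoint from $\Gamma_n$.

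Since $\C\setminus\Gamma_n$ has exactly two components (Jordan curve theorem), and
\[
\C=\Psi(W_n)\sqcup\Gamma_n\sqcup\bigl(\Psi(U_n)\cup K\bigr),
\]
it suffices to show that $\Psi(U_n)\cup K$ is connected and bounded. Boundedness follows from Lemma~\ref{lem:collar}. Connectedness holds because $K$ lies in the closure of the connected set $\Psi(U_n)$. It follows that this set is exactly the bounded component $\Omega_n$. This proves $K\subset\Omega_n$ and \eqref{eq:enclosed}.

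\textbf{Step 3: shrinking into neighborhoods.} Let $V\supset K$ be open. By compactness, $V$ contains a $\rho$-neighborhood of $K$ for some $\rho>0$. By \eqref{eq:collar-shrink}, choose $t>0$ with $\dist(\Psi(w),K)<\rho$ whenever $0<\im w\le t$. Since $cM_n\to0$ by \eqref{eq:Mn}, we have $cM_n\le t$ for large $n$. Then
\[
\overline{\Omega_n}=K\cup\Psi(\overline{U_n}\setminus\R)
\]
lies in the $\rho$-neighborhood of $K$, hence in $V$.

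\textbf{Main obstacle.} The delicate point is Step 2, specifically justifying that the closure of $\Psi(U_n)$ meets $K$, and more precisely that $K$ lies in the bounded complementary component rather than on the unbounded side. To settle this, I would use that $\Psi(W_n)\cup\Gamma_n\cup\{\infty\}$ is closed in $\Chat$ and misses $K$. Then $K$ sits in one complementary component of $\Gamma_n$; as $K$ is compact, hence bounded, and disjoint from the unbounded component, that component must be $\Omega_n$. This avoids any boundary-extension argument for $\Psi$.
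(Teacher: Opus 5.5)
Your proof is correct, and for the central claim it takes a different route from the paper. The paper passes to $\zeta=e^{-2\pi i w}$, where the graph is a star-shaped analytic Jordan curve around the unit circle. It then appeals to the exterior Riemann map preserving the two sides of that curve. Alternatively, it notes that $\Gamma_n$ is homotopic in $\C\setminus K$ to an enclosing equipotential, so it has nonzero winding number about each point of $K$. The identity \eqref{eq:enclosed} is then read off from this side-preservation rather than derived in detail.

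You instead split $\C\setminus\Gamma_n$ into the two disjoint connected sets $\Psi(W_n)$ and $\Psi(U_n)\cup K$ and let the Jordan curve theorem finish. There are exactly two components, $\Omega_n\neq\varnothing$, and $\Psi(W_n)$ is unbounded, so each piece is a whole component. This gives $K\subset\Omega_n$ and \eqref{eq:enclosed} at once. Your route needs only the connectedness of $K$, the collar lemma and the Jordan curve theorem, and it makes the set identity fully explicit. The paper's is shorter because it relies on standard facts about winding numbers and conformal maps near the curve.

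A small point of logic: boundedness of $\Psi(U_n)\cup K$ plays no role in placing it inside $\Omega_n$; connectedness plus the partition is what does it. Your Step 3 is the same as the paper's.

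One sentence in Step 2 needs repair. The lemma is used before $\Int K=\varnothing$ is established. It feeds Propositions~\ref{prop:long-iterates}, \ref{prop:poly-averages} and \ref{prop:recurrence}, and hence Proposition~\ref{prop:empty}. Also, fullness does not imply $\partial K=K$. So you cannot assert that $K$ lies in the closure of $\Psi(U_n)$; this fails whenever $K$ has interior.

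You do not need that claim. Since $\Psi(U_n)$ is bounded, a sequence $\Psi(w_k)$ with $w_k\in U_n$ and $\im w_k\to0$ has a limit point, and that point lies in $K$ by \eqref{eq:collar-shrink}. Because $K$ is connected and meets $\overline{\Psi(U_n)}$, the union $\Psi(U_n)\cup K$ is connected, which is all your argument uses.
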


\begin{proof}
Under $w\mapsto e^{-2\pi i w}$, the graph has one positive radius
$e^{2\pi c\ell_n(x)}$ for each angle $-2\pi x$. It is therefore a
real-analytic Jordan curve outside the unit circle. The exterior Riemann
map is univalent near this curve and preserves the two sides, with the
unbounded side containing infinity. The bounded side contains $K$,
and the annular region between $K$ and the curve is precisely
\eqref{eq:enclosed}.

Equivalently, the image graph is homotopic in $\C\setminus K$ to
an enclosing equipotential. Its winding number about every point of $K$
is nonzero, so $K$ lies in its Jordan interior.

Since $c\ell_n(x)\le cM_n\to0$, the last assertion follows from
Lemma~\ref{lem:collar}, including the graph boundary itself.
\end{proof}

\begin{proposition}[Holomorphy on the entire enclosed domain]\label{prop:long-iterates}
For all sufficiently large $n$, all maps $f^j$, $0\le j\le Q_n$, are
holomorphic on a neighborhood of $\overline{\Omega_n}$. All their values
on $\overline{\Omega_n}$ lie in one fixed bounded neighborhood of $K$.
For $z=\Psi(x+i s\ell_n(x))$, $0<s\le c$,
\begin{equation}\label{eq:physical-conjugacy}
 f^j(z)=\Psi\bigl(g^j(x+i s\ell_n(x))\bigr)
 \qquad(0\le j\le Q_n).
\end{equation}
\end{proposition}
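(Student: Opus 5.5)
We split $\overline{\Omega_n}$ into the compactum $K$ and the collar $\overline{\Omega_n}\setminus K=\Psi(\{0<y\le c\ell_n(x)\}/\Z)$, and we iterate one step at a time, keeping every intermediate image inside a fixed neighborhood of $K$ on which $f$ is univalent. The domain $U$ of $f$ is open and $f(K)=K$, so we fix a compact neighborhood $W\subset U$ of $K$ with $f(W)\subset U$. We then shrink it, using Lemma~\ref{lem:collar}, so that the conjugacy $f\circ\Psi=\Psi\circ g$ of \eqref{eq:g-lift} holds on $\Psi(\{0<\im w<\Delta'\})$ for some $\Delta'$, and so that this collar lies in $W$.

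\textbf{Points of the collar.} Let $z=\Psi(x+is\ell_n(x))$ with $0<s\le c$. By Proposition~\ref{prop:tube}, every $g^j(z_s)$ with $0\le j\le Q_n$ is defined and satisfies $0<\im g^j(z_s)<t_n$. Since $t_n\to0$, we may take $n$ large enough that $t_n<\Delta'$. We then prove \eqref{eq:physical-conjugacy} by induction on $j$. If $f^j(z)=\Psi(g^j(z_s))$ and $g^j(z_s)$ lies in the thin strip, then $f^j(z)\in W$, so $f^{j+1}(z)$ is defined and equals $f(\Psi(g^j z_s))=\Psi(g^{j+1}z_s)$. The images stay in $\Psi(\{0<\im w<t_n\})$, which by Lemma~\ref{lem:collar} lies in a fixed bounded set and in fact shrinks toward $K$.

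\textbf{Points of $K$.} For $z\in K$ we have $f^j(z)\in K$ by invariance. Hence every $f^j$ is defined on $K\cup(\overline{\Omega_n}\setminus K)=\overline{\Omega_n}$, and all values lie in $W'=K\cup\Psi(\{0<\im w<t_n\})$, a bounded set.

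\textbf{Holomorphy on an open neighborhood.} It remains to get holomorphy on an open set containing $\overline{\Omega_n}$, not just on the set itself. We use the same argument with a slightly larger height $s\le c'$, where $c<c'$ still satisfies \eqref{eq:small-c} with slack; for instance, choose $c$ with $2\tau cB<\tfrac12\log(5/4)$ from the start, or apply Proposition~\ref{prop:tube} at height $2c$. This gives the conjugacy on $\Omega'_n=K\cup\Psi(\{0<y<c'\ell_n(x)\})$. This set is open: it is the interior of the Jordan curve at the larger height, by the argument of Lemma~\ref{lem:enclosed}, and it contains $\overline{\Omega_n}$ because $\ell_n>0$. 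On $\Omega'_n$ each $f^j$ is a composition of univalent holomorphic maps, each applied where the previous image lies in $W\subset U$, so it is holomorphic.

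\textbf{Main obstacle.} The delicate point is that a neighborhood of $K$ mapping into $W$ under $Q_n$ iterates is not automatic, since $Q_n\to\infty$; the collar estimates are what supply it. Because the tube bound holds uniformly up to $j=Q_n$, the induction never leaves the strip. The second thing to check is that a single uniform $t_n$ controls all heights at once, so that the fixed bounded neighborhood does not depend on $n$.
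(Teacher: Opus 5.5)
Your proof is correct, and its core matches the paper's. You fix a neighborhood of $K$ compactly inside $U$ and use Lemma~\ref{lem:collar} to place the thin collar $\Psi(\{0<\im w\le t_n\})$ inside it and inside the domain of \eqref{eq:g-lift}. You then run the induction on $j$ with the heights from \eqref{eq:tube-im}, and handle points of $K$ by invariance.

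The only real divergence is how you get an \emph{open} neighborhood of $\overline{\Omega_n}$. The paper does this softly. The sets $U^{(1)}=U$, $U^{(r+1)}=\{z\in U^{(r)}:f^r(z)\in U\}$ are open by continuity, and the first $r$ iterates are holomorphic on them. The orbit confinement already proved on $\overline{\Omega_n}$ then gives $\overline{\Omega_n}\subset U^{(Q_n)}$, with no further estimate.

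You instead raise the curve to height $c'>c$ and exhibit the explicit Jordan domain $\Omega'_n$. This gives a concrete neighborhood whose orbits still stay in the thin strip, but it costs an extra estimate. Proposition~\ref{prop:tube} is stated only for $s\le c$, so you must either re-choose $c$ with slack so that $2c$ still satisfies \eqref{eq:small-c}, or rerun the bootstrap at height $2c$.
\begin{itemize}
\item Re-choosing $c$ is legitimate, since $c$ is only constrained from above, but it changes the fixed constant rather than proving the statement for the given $c$.
\item Rerunning the bootstrap also works: there $|L_j(s)|\le2\log(5/4)$ gives $|E_{j,s}|\le 9/16$. This still closes the induction, since $25/16\le2$, and it keeps imaginary parts positive.
\end{itemize}

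Your ``main obstacle'' remark overstates the difficulty. For fixed $n$, once every point of the compact set $\overline{\Omega_n}$ has its first $Q_n$ iterates in $U$, an open neighborhood on which all $f^j$ are holomorphic follows from continuity alone. The collar and tube estimates are needed only to prove that confinement on $\overline{\Omega_n}$ itself, and to keep the values in a neighborhood of $K$ independent of $n$.
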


\begin{proof}
Fix a bounded neighborhood $U_0$ of $K$ with
$\overline{U_0}\compactin U$, where $f$ is holomorphic and one-to-one.
By the collar lemma and \eqref{eq:strip-height}, the image under $\Psi$
of the strip $0<\im w\le t_n$ lies in $U_0$ for sufficiently large $n$.
Choose $n$ still larger if needed so this strip is contained in the
external coordinate domain of \eqref{eq:g-lift}.

Every point of $\overline{\Omega_n}\setminus K$ has the form used in
\eqref{eq:physical-conjugacy}, by Lemma~\ref{lem:enclosed}.
Proposition~\ref{prop:tube} shows that its successive external iterates
through time $Q_n$ remain in $0<\im w\le t_n$. Induction using
\eqref{eq:g-lift} proves \eqref{eq:physical-conjugacy} and places all
these iterates in $U_0$. Points of $K$ remain in $K$ by invariance.

To obtain a neighborhood of the whole closed domain, set $U^{(1)}=U$
and define recursively
\[
 U^{(r+1)}=\{z\in U^{(r)}:f^r(z)\in U\}.
\]
Each $U^{(r)}$ is open, and the first $r$ iterates are holomorphic there.
The preceding orbit estimates show that
$\overline{\Omega_n}\subset U^{(Q_n)}$. Hence every iterate under
consideration is holomorphic on an open neighborhood of
$\overline{\Omega_n}$. This neighborhood may depend on $n$.
\end{proof}

\begin{proposition}[Uniform Euclidean orbit matching]\label{prop:euclidean-match}
There are $\varepsilon_n\to0$ such that, for every
$\zeta,\xi\in\Gamma_n$, a permutation $\sigma$ of
$\{0,\ldots,Q_n-1\}$ satisfies
\begin{equation}\label{eq:euclidean-match}
 \max_{0\le j<Q_n}|f^j(\zeta)-f^{\sigma(j)}(\xi)|
 \le\varepsilon_n.
\end{equation}
One may take $\varepsilon_n=\omega_C(t_n)$, with $C$ from
\eqref{eq:Cmatch} and $\omega_C$ from Lemma~\ref{lem:metric-conversion}.
\end{proposition}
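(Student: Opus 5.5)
The plan is to combine three earlier results: the hyperbolic matching of Proposition~\ref{prop:hyperbolic-match}, the conjugacy \eqref{eq:physical-conjugacy} of Proposition~\ref{prop:long-iterates}, and the metric conversion of Lemma~\ref{lem:metric-conversion}.

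\textbf{Step 1: lift to external coordinates.} Take $\zeta,\xi\in\Gamma_n$. By \eqref{eq:physical-curve}, we may write $\zeta=\Psi(\gamma_n(x))$ and $\xi=\Psi(\gamma_n(y))$ for some $x,y\in\R$. The point $\gamma_n(x)=x+ic\ell_n(x)$ is of the form $x+is\ell_n(x)$ with $s=c$. Hence \eqref{eq:physical-conjugacy} gives
\[
 f^j(\zeta)=\Psi\bigl(g^j(\gamma_n(x))\bigr),\qquad
 f^{k}(\xi)=\Psi\bigl(g^{k}(\gamma_n(y))\bigr)
\]
for all $0\le j,k\le Q_n$. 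In particular this holds for $k=\sigma(j)$, whatever permutation $\sigma$ of $\{0,\ldots,Q_n-1\}$ is chosen.

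\textbf{Step 2: match in the hyperbolic metric.} For $n$ large, Proposition~\ref{prop:hyperbolic-match} provides a permutation $\sigma$ with
\[
 d_{\HH/\Z}\bigl(g^j(\gamma_n(x)),g^{\sigma(j)}(\gamma_n(y))\bigr)\le C
 \qquad(0\le j<Q_n).
\]
All these points have imaginary part at most $t_n$. They also have positive imaginary part, by the lower bound in \eqref{eq:tube-im} with $s=c>0$; this uses $\ell_n>0$.

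\textbf{Step 3: convert to Euclidean distance.} Apply Lemma~\ref{lem:metric-conversion} with $A=C$ and $t=t_n$. This gives
\[
 |f^j(\zeta)-f^{\sigma(j)}(\xi)|\le\omega_C(t_n)
 \qquad(0\le j<Q_n).
\]
Since $t_n=2cBM_n\to0$ by \eqref{eq:Mn}, and $\omega_C(t)\to0$ as $t\downarrow0$, we get $\varepsilon_n:=\omega_C(t_n)\to0$. For the finitely many small $n$ excluded above, set $\varepsilon_n$ equal to the diameter of the fixed disk; this does not affect the limit.

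\textbf{Main obstacle.} The only delicate point is the domain of validity of each step. First, the formula $\omega_C(t)=C\eta(e^Ct)$ needs $e^{C}t_n$ to lie below the height where $\eta$ is defined and where the collar of Lemma~\ref{lem:collar} is bounded. This holds for $n$ large because $C$ is fixed and independent of $n$. Second, the quotient-distance lift in Lemma~\ref{lem:metric-conversion} relies on $\Psi$ being $1$-periodic. This is compatible with Step 1, because $\Psi(w+k)=\Psi(w)$, so the choice of integer translate does not change the points in the dynamical plane. With both points checked, the statement follows directly.
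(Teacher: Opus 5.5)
Your proposal is correct and follows the paper's proof: write $\zeta=\Psi(\gamma_n(x))$ and $\xi=\Psi(\gamma_n(y))$, take the permutation from Proposition~\ref{prop:hyperbolic-match}, apply Lemma~\ref{lem:metric-conversion} to each matched pair, and transfer to $f$ via \eqref{eq:physical-conjugacy}; your checks on the positivity of heights and the validity range of $\omega_C$ are details the paper leaves implicit. The one unnecessary point is your patch for small $n$: there $f^j$ need not be defined on $\Gamma_n$, so no fixed disk is available, and the paper simply discards finitely many initial indices.
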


\begin{proof}
Write $\zeta=\Psi(\gamma_n(x))$ and
$\xi=\Psi(\gamma_n(y))$. Proposition~\ref{prop:hyperbolic-match} supplies
a permutation with hyperbolic distances at most $C$, and all endpoint
heights at most $t_n$. Apply Lemma~\ref{lem:metric-conversion} to each
matched pair, then use \eqref{eq:physical-conjugacy}.
\end{proof}

\section{Holomorphic averages and uniform recurrence}\label{sec:averages}

\begin{proposition}[Uniform holomorphic averages]\label{prop:poly-averages}
For every polynomial $P$,
\begin{equation}\label{eq:poly-limit}
 \norm{A_{Q_n}P-P(0)}_K\longrightarrow0.
\end{equation}
If $L_P=\max|P'|$ on a fixed disk containing these orbit points,
then
\begin{equation}\label{eq:poly-bound}
 \norm{A_{Q_n}P-P(0)}_K\le2L_P\omega_C(t_n).
\end{equation}
This proposition also holds in the linearizable case and does not
require empty interior.
\end{proposition}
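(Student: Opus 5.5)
The plan is to verify the three hypotheses of Proposition~\ref{prop:criterion} for the Jordan domains $\Omega_n$ and curves $\Gamma_n$ of Section~\ref{sec:physical}, with $Q_n=q_{n+1}\to\infty$. The estimate \eqref{eq:poly-limit} then follows at once, and \eqref{eq:poly-bound} is exactly \eqref{eq:poly-two} with $\varepsilon_n=\omega_C(t_n)$.

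The steps, in order, are as follows. First, Lemma~\ref{lem:enclosed} shows that $\Omega_n$ is a bounded Jordan domain containing $K$, with boundary $\Gamma_n$. Second, hypothesis (a) is the first assertion of Proposition~\ref{prop:long-iterates}, which makes every $f^j$, $0\le j\le Q_n$, holomorphic on a neighborhood of $\overline{\Omega_n}$.

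Third, hypothesis (b) needs the orbit points $f^j(\zeta)$, $\zeta\in\Gamma_n$, $0\le j<Q_n$, to lie in one fixed disk, independent of $n$. Proposition~\ref{prop:long-iterates} places these points in the fixed bounded neighborhood $U_0$ of $K$ chosen in its proof, and $U_0$ does not depend on $n$. So a single disk containing $\overline{U_0}$ works. I would fix this disk once, so that $L_P=\max|P'|$ over it is independent of $n$, as the statement requires.

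Fourth, hypothesis (c) is Proposition~\ref{prop:euclidean-match} with $\varepsilon_n=\omega_C(t_n)$. To see that $\varepsilon_n\to0$, note that $t_n=2cBM_n\to0$ by \eqref{eq:Mn}, and $\omega_C(t)\to0$ by Lemma~\ref{lem:metric-conversion}, since $C=2D+4L$ is a fixed constant.

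The remark about the linearizable case and empty interior needs a check that nothing above used either assumption. Theorem~\ref{input:external}, Denjoy's theorem, and the real and hyperbolic estimates concern only the external circle map $g$. The collar and Bloch-type lemmas concern only the exterior conformal map. The maximum principle in Proposition~\ref{prop:criterion} is applied on $\Omega_n$, whose interior may contain interior points of $K$, and holomorphy of $A_{Q_n}P$ there is guaranteed by (a).

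The main point needing care is the uniformity hidden in (b) and (c). The fixed disk and $L_P$ must not depend on $n$. The permutation bound must hold simultaneously for all $\zeta,\xi\in\Gamma_n$ and all indices $j<Q_n$, with iterates taken only up to time $Q_n$, inside the range where \eqref{eq:physical-conjugacy} is valid. Both hold by the cited statements, so the proof reduces to assembling them.
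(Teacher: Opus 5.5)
Your proposal is correct and is the paper's proof: the paper also checks the hypotheses of Proposition~\ref{prop:criterion} using Lemma~\ref{lem:enclosed} and Propositions~\ref{prop:long-iterates}--\ref{prop:euclidean-match}, then applies \eqref{eq:poly-two} with $\varepsilon_n=\omega_C(t_n)$. Your explicit check that the disk in (b), and hence $L_P$, does not depend on $n$ (via the fixed neighborhood $U_0$) spells out a point the paper leaves implicit.
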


\begin{proof}
Lemma~\ref{lem:enclosed} and
Propositions~\ref{prop:long-iterates}--\ref{prop:euclidean-match}
verify every hypothesis of Proposition~\ref{prop:criterion}.
Apply \eqref{eq:poly-two}.
\end{proof}

\begin{proposition}[Uniform closest-return recurrence]\label{prop:recurrence}
With the consecutive denominators of the external rotation number,
\begin{equation}\label{eq:uniform-recurrence}
 \norm{f^{q_n}-\id}_K\longrightarrow0.
\end{equation}
The same statement holds for $f^{-q_n}$ on $K$.
\end{proposition}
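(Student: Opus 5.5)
The plan is to run the same maximum-principle scheme as in Proposition~\ref{prop:criterion}, with the Birkhoff average replaced by the single holomorphic function $f^{q_n}-\id$. The curves $\Gamma_n$ and domains $\Omega_n$ are the ones already built. Since $q_n\le q_{n+1}=Q_n$, Proposition~\ref{prop:long-iterates} shows that $f^{q_n}$ is holomorphic on a neighborhood of $\overline{\Omega_n}$. The function $\Phi_n=f^{q_n}-\id$ is therefore holomorphic on $\overline{\Omega_n}$. By the maximum principle and $K\subset\Omega_n$, it suffices to prove $\sup_{\Gamma_n}|\Phi_n|\to0$.

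To bound $\Phi_n$ on $\Gamma_n$, take $\zeta=\Psi(\gamma_n(x))$. By \eqref{eq:physical-conjugacy} with $j=q_n$, we have $f^{q_n}(\zeta)=\Psi(g^{q_n}(\gamma_n(x)))$. Proposition~\ref{prop:quasi}, equation \eqref{eq:closest-hyperbolic}, gives
\[
 d_{\HH/\Z}\bigl(g^{q_n}(\gamma_n(x)),\gamma_n(x)\bigr)\le D+L.
\]
Both points have imaginary part at most $t_n$: for $\gamma_n(x)$ this holds because $c\ell_n\le cM_n<t_n$, and for the iterate it follows from \eqref{eq:tube-im}. Lemma~\ref{lem:metric-conversion} with $A=D+L$ then yields
\[
 |f^{q_n}(\zeta)-\zeta|\le\omega_{D+L}(t_n)\longrightarrow0,
\]
since $t_n=2cBM_n\to0$ by \eqref{eq:Mn}. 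The maximum principle transfers this bound to $\overline{\Omega_n}\supset K$, which proves $\norm{f^{q_n}-\id}_K\le\omega_{D+L}(t_n)$.

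For $f^{-q_n}$, I will avoid building backward tubes. Since $f(K)=K$, the map $f^{q_n}$ is a bijection of $K$, so every $z\in K$ can be written as $z=f^{q_n}(u)$ with $u\in K$. Then
\[
 |f^{-q_n}(z)-z|=|u-f^{q_n}(u)|\le\norm{f^{q_n}-\id}_K,
\]
and the inverse statement follows from the forward one.

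The main obstacle is to check carefully that the hypotheses of Lemma~\ref{lem:metric-conversion} hold: both endpoints must lie in the thin strip, and the quotient distance must be used correctly, since $g^{q_n}$ shifts real parts by about $p_n$. The second point is harmless because $\Psi$ is $1$-periodic, and \eqref{eq:closest-hyperbolic} is already stated in the quotient. Points of $\Gamma_n$ with $s=c$ are covered by Proposition~\ref{prop:long-iterates}, and the maximum principle needs only holomorphy on $\overline{\Omega_n}$, which that proposition also supplies.
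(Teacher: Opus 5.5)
Your proposal is correct and follows the paper's own proof essentially step for step. You bound $f^{q_n}-\id$ on $\Gamma_n$ via \eqref{eq:closest-hyperbolic} and Lemma~\ref{lem:metric-conversion}, transfer the bound to $K$ by the maximum principle using the holomorphy from Proposition~\ref{prop:long-iterates} (since $q_n\le Q_n$), and get the inverse statement from the bijectivity of $f^{q_n}$ on $K$. Your explicit check that both endpoints have height in $(0,t_n]$ is a detail the paper leaves implicit.
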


\begin{proof}
Proposition~\ref{prop:quasi} and
Lemma~\ref{lem:metric-conversion} give, on $\Gamma_n$,
\[
 |f^{q_n}(z)-z|\le\omega_{D+L}(t_n)\longrightarrow0.
\]
The map $f^{q_n}-\id$ is holomorphic on a neighborhood of
$\overline{\Omega_n}$ by Proposition~\ref{prop:long-iterates}, since
$q_n\le Q_n$. The maximum principle yields the same bound on $K$.
Since $f^{q_n}:K\to K$ is a bijection,
\[
 \sup_{z\in K}|f^{-q_n}(z)-z|
 =\sup_{w\in K}|w-f^{q_n}(w)|,
\]
which proves the inverse statement.
\end{proof}

This uniform recurrence is a classical consequence of quasi-invariant
curves \cite{PMQ,PMBB}. We use it below to study the interior of $K$.

\section{The interior of a Siegel compactum}\label{sec:interior}

It is known from \cite{PMQ} that the interior of a non-linearizable hedgehog is
empty. We prove it here for completeness.

\begin{lemma}[Components of the interior are simply connected]\label{lem:simply-connected}
Every connected component of $\Int K$ is simply connected.
\end{lemma}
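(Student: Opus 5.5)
The approach is to use that $K$ is full, i.e.\ $\C\setminus K$ is connected, together with the maximum principle. Let $W$ be a connected component of $\Int K$ and let $\alpha\subset W$ be a Jordan curve. I want to show that the bounded complementary component $V$ of $\alpha$ is contained in $W$. Then every loop in $W$ is null-homotopic in $W$, which is the definition of simple connectivity. For a general closed loop in $W$ I will either reduce to Jordan polygons, which generate the fundamental group of a planar domain, or equivalently show that $\Chat\setminus W$ is connected.

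The first step is to show $V\subset K$. Suppose some $p\in V$ lies outside $K$. Since $\C\setminus K$ is connected and unbounded, $p$ can be joined to $\infty$ by a path in $\C\setminus K$. This path must cross $\alpha$, because $p$ is inside the Jordan curve $\alpha$ and $\infty$ is outside it. But $\alpha\subset W\subset K$, which is a contradiction. Hence $V\subset K$, and since $V$ is open, $V\subset\Int K$.

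The second step is to show $V\subset W$. The set $V\cup\alpha$ is connected, because $\overline V=V\cup\alpha$ is the closure of a connected set. It is also contained in $\Int K$, using the first step and $\alpha\subset W$. It meets $W$ along $\alpha$. Therefore it lies in the single component $W$. So every Jordan curve in $W$ bounds a disk contained in $W$.

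The final step is to conclude simple connectivity. Equivalently, I will show that $\Chat\setminus W$ is connected. Suppose instead that $\Chat\setminus W=E_1\sqcup E_2$ with $E_1,E_2$ disjoint nonempty closed sets and $\infty\in E_2$. Then $E_1$ is a compact subset of $\C$ surrounded by $W$. The standard planar-topology argument, via a grid of small squares around $E_1$, produces a polygonal Jordan curve in $W$ that separates a point of $E_1$ from $\infty$. This contradicts the second step.

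I expect the main obstacle to be this last, purely topological step: making the grid-polygon separation argument precise. If that proves cumbersome, the alternative is to argue that each bounded component $E$ of $\C\setminus W$ is contained in $K$, by the same path-to-infinity argument using fullness of $K$. Then $E\cup W$ would be a connected subset of $K$. One then checks that $E$ lies in $\Int K$ and hence in $W$, which forces $E$ to be empty. The delicate point in this alternative is the boundary of $E$, which is contained in $\partial W$. Such points may lie in $\partial K$ rather than in $\Int K$, so the Jordan-curve formulation above is the safer route. No dynamics is needed for this lemma; only the fullness of $K$ is used.
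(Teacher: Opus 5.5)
Your proof is correct and follows essentially the same route as the paper. Fullness of $K$ forces the Jordan interior of any Jordan curve in $W$ into $K$, hence into $\Int K$ and then into $W$; the paper simply cites the Jordan-curve criterion for simple connectivity at the end, where you sketch its standard grid-polygon proof, so your alternative route is not needed.
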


\begin{proof}
Let $W$ be such a component and let $J\subset W$ be a Jordan curve.
Its bounded Jordan interior cannot meet $\C\setminus K$: otherwise
the connected unbounded open set $\C\setminus K$ would contain a path
from an interior point to infinity, crossing $J\subset K$.
Thus the Jordan interior is contained in $K$. Being open, it is contained
in $\Int K$; being connected and meeting $W$ near $J$, it is contained
in $W$. The Jordan-curve criterion for simple connectivity now applies.
\end{proof}

\begin{lemma}[All interior components are invariant]\label{lem:invariant-components}
Every connected component $W$ of $\Int K$ satisfies $f(W)=W$.
\end{lemma}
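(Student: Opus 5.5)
Since $f$ is a homeomorphism of $K$ onto itself (by \eqref{eq:setting}), it maps $\Int K$ homeomorphically onto $\Int K$. Hence it permutes the connected components of $\Int K$. Fix a component $W$. The plan is to show that $f(W)$ is a component meeting $W$, and therefore equal to $W$, by using the uniform recurrence of Proposition~\ref{prop:recurrence}.

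\textbf{Step 1: the component of $f^{q_n}(W)$ is $W$ for large $n$.} Pick a point $z\in W$ and $r>0$ with $\overline{B(z,r)}\subset W$. By Proposition~\ref{prop:recurrence}, for all large $n$ we have $|f^{q_n}(z)-z|<r$. So $f^{q_n}(z)\in W$. Since $f^{q_n}(W)$ is a component of $\Int K$ containing $f^{q_n}(z)$, we get $f^{q_n}(W)=W$ for all $n\ge n_0$.

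\textbf{Step 2: pass from $f^{q_n}$ to $f$.} By Lemma~\ref{lem:cf}, consecutive denominators $q_n,q_{n+1}$ are coprime. So there are integers $a,b$ with $aq_n+bq_{n+1}=1$. Since $f^{q_n}(W)=W$ and $f^{q_{n+1}}(W)=W$, and the inverses $f^{-q_n}$ also fix $W$ because $f$ acts as a bijection on components, we conclude
\[
 f(W)=f^{aq_n+bq_{n+1}}(W)=W.
\]
The only point needing care is that negative iterates act on the set of components. This holds because $f|_K$ is a homeomorphism of $K$, so it induces a bijection on the components of $\Int K$, and $f^{-1}$ preserves $\Int K$.

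I do not expect a genuine obstacle. The step most worth double-checking is that $f$ maps $\Int K$ onto $\Int K$. This follows from invariance of domain: $f$ is injective and holomorphic on $U\supset K$ with $f(K)=K=f^{-1}(K)$, so open subsets of $K$ are sent to open subsets of $K$. Lemma~\ref{lem:simply-connected} is not needed for this statement.
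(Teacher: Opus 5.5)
Your proof is correct and follows the paper's argument. Uniform recurrence (Proposition~\ref{prop:recurrence}) gives $f^{q_n}(W)=W$ for all large $n$, and coprimality of consecutive denominators then forces $f(W)=W$; the paper phrases the last step as the stabilizer $\{m: f^m(W)=W\}$ being a subgroup $p\Z$ containing $q_n$ and $q_{n+1}$, which is the same as your B\'ezout argument.
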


\begin{proof}
Because $f$ is a homeomorphism between neighborhoods of $K$ and maps
$K$ onto itself, it maps $\Int K$ onto itself and permutes its connected
components bijectively. Fix $z\in W$ and a small disk about $z$ contained
in $W$. By Proposition~\ref{prop:recurrence}, $f^{q_n}(z)\in W$ for
every sufficiently large $n$. Hence $f^{q_n}(W)=W$ for all those $n$.

The set of integers $m$ with $f^m(W)=W$ is a subgroup of $\Z$. It
contains a positive integer and therefore equals $p\Z$ for some $p\ge1$.
It contains two consecutive large denominators $q_n,q_{n+1}$, whose
greatest common divisor is $1$. Thus $p=1$ and $f(W)=W$.
\end{proof}

\begin{lemma}[Recurrence for a disk automorphism]\label{lem:disk-auto}
If an automorphism $G$ of $\D$ has a point $w\in\D$ and integers
$n_k\to\infty$ with $G^{n_k}(w)\to w$, then $G$ has a fixed point in
$\D$.
\end{lemma}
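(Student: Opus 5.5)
The plan is to use the classification of automorphisms of $\D$ together with the fact that automorphisms are isometries of the hyperbolic metric $d_{\D}$. Every automorphism $G\neq\id$ is elliptic (exactly one fixed point in $\D$), parabolic (one fixed point on $\partial\D$, none inside), or hyperbolic (two fixed points on $\partial\D$, none inside). The identity fixes every point, and the elliptic case is exactly the conclusion. So it suffices to show that the parabolic and hyperbolic cases are incompatible with the recurrence $G^{n_k}(w)\to w$. Equivalently, we show that in those cases $G^n(w)\to\partial\D$ as $n\to+\infty$.

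For the hyperbolic case, conjugate to the upper half-plane so that $G(z)=\lambda z$ with $\lambda>0$, $\lambda\neq1$. Then $G^n(w)=\lambda^n w$. Hence $d_{\HH}(G^n(w),w)\ge|\log\im G^n(w)-\log\im w|=|n\log\lambda|\to\infty$, using the inequality $|d\log(\im w)|\le ds_{\HH}$ already used in the proof of Lemma~\ref{lem:metric-conversion}. In particular $G^{n_k}(w)$ cannot converge to $w$ in $\HH$, since the hyperbolic and Euclidean topologies agree on compact subsets.

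For the parabolic case, conjugate so that $G(z)=z+b$ with $b\in\R\setminus\{0\}$. Then $G^n(w)=w+nb$ has constant imaginary part and real part tending to $\pm\infty$. Its Euclidean distance to $w$ is $|n|\,|b|\to\infty$, which again contradicts convergence to $w$. Since $n_k\to\infty$, both cases are excluded, and therefore $G$ is elliptic or the identity and has a fixed point in $\D$.

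The only step needing care is the classification itself. If we want to avoid quoting it, a self-contained alternative is available. The orbit $\{G^{n_k}(w)\}$ lies in a compact hyperbolic ball $\overline B$ about $w$. Because $G$ is an isometry, $G^{n_k}(\overline B)$ stays in a fixed compact set. By a normal-family (Montel) argument, a subsequence $G^{n_k}$ converges locally uniformly to an automorphism $\Phi$ of $\D$; it is not constant, since the limit preserves hyperbolic distances. Then the closure of the group generated by $G$ inside $\mathrm{Aut}(\D)$ is compact. A compact subgroup of $\mathrm{Aut}(\D)$ has a bounded orbit of $w$, and the hyperbolic circumcenter of that orbit (well defined in the negatively curved disk) is fixed by $G$. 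The main obstacle in this route is the existence and uniqueness of the circumcenter. For that reason the explicit normal-form computation in the first route is the one I would write.
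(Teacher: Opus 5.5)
Your proposal is correct and follows essentially the same route as the paper: rule out a fixed-point-free automorphism by conjugating to the upper half-plane normal forms $w\mapsto Aw$ ($A\neq1$) or $w\mapsto w+B$ ($B\neq0$), and observe that forward orbits leave every compact subset of $\HH$. The only difference is that you quote the elliptic/parabolic/hyperbolic classification outright, whereas the paper briefly justifies the existence of a boundary fixed point via the fixed-point quadratic and reflection in the unit circle.
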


\begin{proof}
Recall that a disk automorphism is a fractional-linear transformation
$G(z)=e^{it}(z-a)/(1-\overline a z)$. If it has no fixed point in the
disk, it has a fixed point on the unit circle. One can see this directly
from the fixed-point quadratic: reflection in the unit circle pairs
non-boundary fixed points, so a non-boundary fixed point outside would
have a partner inside. The rotation case is immediate.

Send a boundary fixed point to infinity by a Cayley transform. The
conjugate automorphism of the upper half-plane fixes infinity and has
the form $w\mapsto Aw+B$, with $A>0$ and $B\in\R$. If $A\ne1$, after
a real translation it is $w\mapsto Aw$; its forward iterates converge
to a boundary point, either $0$ or infinity. If $A=1$ and $B\ne0$, its
iterates tend to infinity. The remaining case is the identity, which
does have interior fixed points. None of the nonidentity cases without
an interior fixed point allows a recurrent point in the upper
half-plane. This proves the lemma.
\end{proof}

\begin{proposition}[An interior component forces linearizability]\label{prop:empty}
If $\Int K\ne\varnothing$, then it has exactly one component, this
component contains $0$, and the germ of $f$ at $0$ is holomorphically
linearizable. Consequently, in the non-linearizable case,
\begin{equation}\label{eq:empty-interior}
 \Int K=\varnothing.
\end{equation}
\end{proposition}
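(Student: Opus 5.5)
Let $W$ be a connected component of $\Int K$. By Lemma~\ref{lem:invariant-components}, $f(W)=W$, and by Lemma~\ref{lem:simply-connected}, $W$ is simply connected. Since $K$ is bounded, $W\neq\C$, so the Riemann map $\phi:\D\to W$ exists. The conjugate $G=\phi^{-1}\circ f\circ\phi$ is an automorphism of $\D$, because $f$ is injective and maps $W$ onto $W$.

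Pick $z\in W$ and put $w=\phi^{-1}(z)$. Proposition~\ref{prop:recurrence} gives $f^{q_n}(z)\to z$. Since $\phi^{-1}$ is continuous at $z$ and $f^{q_n}(z)\in W$, we get $G^{q_n}(w)\to w$. Lemma~\ref{lem:disk-auto} then yields a fixed point $w_0\in\D$ of $G$, so $z_0=\phi(w_0)\in W$ is a fixed point of $f$.

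The key step is to show that $z_0=0$. Conjugating by the disk automorphism sending $w_0$ to $0$, $G$ becomes a rotation $\zeta\mapsto e^{2\pi i\alpha}\zeta$. Hence $f'(z_0)=e^{2\pi i\alpha}$ has modulus one and $f|_W$ is conjugate to a rotation.

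To identify the fixed point, I would use the holomorphic averages. Apply Proposition~\ref{prop:poly-averages} to $P(z)=z$. This gives $A_{Q_n}(\id)(z_0)\to0$ uniformly on $K$. But $A_{Q_n}(\id)(z_0)=z_0$ because $z_0$ is fixed, so $z_0=0$. Thus $0\in W$. Since every component contains a fixed point, and the same argument forces that fixed point to be $0$, two distinct components would both contain $0$, which is impossible. So there is exactly one component.

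Finally, $f|_W$ is conjugate by $\phi$ to a rotation fixing $\phi^{-1}(0)$, after normalizing $\phi(0)=0$. Then $\phi^{-1}$ is a holomorphic linearization of $f$ near $0$: $\phi^{-1}\circ f=R\circ\phi^{-1}$ with $R$ the rotation. Its multiplier must equal $f'(0)=e^{2\pi i\theta}$.

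The main subtlety is making sure the fixed point lies at $0$ rather than at some other neutral fixed point. The averaging trick with $P=\id$ handles this cleanly. The contrapositive gives \eqref{eq:empty-interior}.
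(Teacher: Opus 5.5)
Your proof is correct and follows the paper's argument essentially step for step. It conjugates $f|_W$ by a Riemann map to a disk automorphism, uses uniform recurrence with Lemma~\ref{lem:disk-auto} to get an interior fixed point, and averages $P(z)=z$ to force that fixed point to be $0$, which also gives a unique component. It then linearizes via a Riemann map normalized at $0$ and Schwarz's lemma; computing the rotation conjugacy at $z_0$ before showing $z_0=0$ is harmless but unnecessary.
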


\begin{proof}
Let $W$ be an interior component. It is bounded and simply connected,
and $f:W\to W$ is a conformal automorphism by
Lemmas~\ref{lem:simply-connected} and \ref{lem:invariant-components}.
A Riemann map conjugates this restriction to a disk automorphism.
Uniform recurrence on $K$ supplies an interior recurrent point of that
automorphism. Lemma~\ref{lem:disk-auto} therefore gives a fixed point
$a\in W$ of $f$.

Apply Proposition~\ref{prop:poly-averages} to the polynomial $P(z)=z$.
Since $f^j(a)=a$ for every $j$, its average at $a$ is exactly $a$,
whereas the proposition says this average tends to $P(0)=0$. Thus
$a=0$. Every interior component would therefore contain $0$, so there
is only one.

Choose a Riemann map $\chi:\D\to W$ with $\chi(0)=0$. The map
$\chi^{-1}\circ f\circ\chi$ is an automorphism of $\D$ fixing zero.
Schwarz's lemma applied to it and its inverse gives
\[
 \chi^{-1}\circ f\circ\chi(z)=e^{2\pi i\theta}z.
\]
This is a holomorphic linearization on $W$, in particular near $0$.
Its contrapositive proves \eqref{eq:empty-interior}.
\end{proof}

\begin{remark}
Only the polynomial $P(z)=z$ is used to locate the fixed point in the
preceding proof. Polynomial approximation is needed only to pass from
holomorphic averages to arbitrary continuous functions.
\end{remark}

\section{Unique ergodicity}\label{sec:ergodicity}

\begin{hypothesis}[Mergelyan approximation \cite{Mergelyan51}]\label{input:mergelyan}
If a compact set $E\subset\C$ has connected complement, every continuous
function on $E$ that is holomorphic on $\Int E$ can be approximated
uniformly on $E$ by complex polynomials. In particular, if
$\Int E=\varnothing$, the polynomials are dense in $C(E)$.
\end{hypothesis}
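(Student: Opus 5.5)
The statement above is Mergelyan's theorem, which the paper quotes from \cite{Mergelyan51} rather than proves. Only the special case $\Int E=\varnothing$ is used later; this is Lavrentiev's theorem. I would prove the general statement along the classical route (the Vitushkin-style localization in Rudin's \emph{Real and Complex Analysis}, \S20) and note where the empty-interior case simplifies.

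\emph{Reductions and smoothing.} Let $\varphi\in C(E)$ be holomorphic on $\Int E$, and fix $\varepsilon>0$. By Tietze, extend $\varphi$ to a compactly supported continuous function on $\C$ with modulus of continuity $\omega$. By Runge's theorem, rational functions with poles in $\C\setminus E$ can be replaced by polynomials uniformly on $E$, since $\C\setminus E$ is connected. So it suffices to approximate $\varphi$ uniformly on $E$ by functions holomorphic on a neighborhood of $E$. Convolving with a radial mollifier at scale $\delta$ gives a smooth $\Phi$ with three properties:
\[
|\Phi-\varphi|\le\omega(\delta),\qquad |\bar\partial\Phi|\le C\omega(\delta)/\delta,
\]
and $\bar\partial\Phi=0$ at points of $\Int E$ at distance more than $\delta$ from $\partial E$. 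The third property uses the mean-value property of holomorphic functions; in the empty-interior case it is vacuous. Hence $\operatorname{supp}\bar\partial\Phi$ lies in the set $X$ of points at distance at most $2\delta$ from $\C\setminus\Int E$. The Cauchy--Pompeiu formula then gives
\[
\Phi(z)=-\frac1\pi\int_X\frac{\bar\partial\Phi(\zeta)}{\zeta-z}\,dA(\zeta).
\]

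\emph{Localization and kernel replacement.} Cover $X$ by finitely many disks $B(b_j,2\delta)$ with $b_j\notin E$, using a partition of unity of bounded overlap. This is the key geometric input. Because $\C\setminus E$ is connected and unbounded, the component of $\C\setminus E$ through $b_j$ contains a continuum $E_j\subset B(b_j,2\delta)$ of diameter at least $2\delta$; this is where full complement is essential.

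For such a continuum, take the conformal map of $\Chat\setminus E_j$ onto the exterior of a disk. Koebe-type estimates then give a function $Q_j(\zeta,z)$ with the following properties:
\begin{enumerate}
\item it is holomorphic in $z$ off $E_j$;
\item $|Q_j(\zeta,z)|\le 50/\delta$ everywhere;
\item $|Q_j(\zeta,z)-1/(\zeta-z)|\le 50\delta^2/|z-b_j|^3$ for $|z-b_j|\ge4\delta$;
\item it matches the Cauchy kernel to first order at infinity.
\end{enumerate}
Replace $1/(\zeta-z)$ by $Q_j$ on the $j$-th piece of $X$. This defines $F$, which is holomorphic on a neighborhood of $E$.

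\emph{Estimate.} On the $j$-th piece, split the $z$-range into near ($|z-b_j|<4\delta$) and far. Near contributions are bounded by $\delta^2\cdot(\omega(\delta)/\delta)\cdot(1/\delta)$ times the bounded overlap. Far contributions sum like $\sum_j\delta^3\omega(\delta)/|z-b_j|^3$, which is $O(\omega(\delta))$ by a lattice-sum comparison. Thus $|F-\Phi|\le C'\omega(\delta)$ on $E$, and hence $|F-\varphi|\le(C'+1)\omega(\delta)$. Choosing $\delta$ small and applying Runge finishes the proof.

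The main obstacle is constructing $Q_j$ with the decay $\delta^2/|z-b_j|^3$. This decay needs the second Laurent coefficient of $1/(g_j(z)-g_j(\zeta))$ to vanish, where $g_j$ is the exterior conformal map of $E_j$. I would first try Rudin's construction: take $Q_j$ as an explicit polynomial in $g_j$ and $1/g_j$, and bound its coefficients by the Koebe $1/4$ theorem applied with $\operatorname{diam}E_j\ge 2\delta$.
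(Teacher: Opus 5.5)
The paper does not prove this statement. It is one of the imported inputs, typeset in the same \texttt{hypothesis} environment as Denjoy's theorem and the external-circle correspondence, and supported only by the citation \cite{Mergelyan51}. Only its last clause is ever used, for $E=K$, once Proposition~\ref{prop:empty} gives $\Int K=\varnothing$.

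You instead reconstruct the classical proof from Rudin's \emph{Real and Complex Analysis}, Chapter~20: Tietze extension, radial mollification, Cauchy--Pompeiu, localization to disks centred off $E$ containing continua of diameter comparable to $\delta$ in $\C\setminus E$, replacement of the Cauchy kernel by kernels holomorphic off those continua with an $O(\delta^2|z-\zeta|^{-3})$ tail, and finally Runge. The outline is sound. The citation buys the paper brevity for a classical result. Your route buys self-containedness, and it makes one point visible. In the empty-interior case only the mean-value step disappears. The paper deliberately makes no zero-area assumption on $K$ (see the remark after Lemma~\ref{lem:metric-conversion}), and positive-area hedgehogs are known to exist, cf.\ \cite{Biswas16}. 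So the shortcut through Hartogs--Rosenthal plus Runge is unavailable, and the kernel-replacement lemma is genuinely needed even for the Lavrentiev case actually used.

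Two small corrections for a write-up.

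First, your $X$ as defined contains all of $\C\setminus\Int E$ and is unbounded. Cover instead the compact set $\operatorname{supp}\bar\partial\Phi$. It lies within distance $\delta$ of $\overline{\C\setminus E}=\C\setminus\Int E$, so finitely many open disks $B(b_j,2\delta)$ with $b_j\notin E$ cover it.

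Second, the aside about $1/(g_j(z)-g_j(\zeta))$ names the wrong object, since that function has a pole at $z=\zeta\notin E_j$. The kernel in Rudin's lemma is a quadratic polynomial without constant term in $1/g_j$; positive powers of $g_j$ would destroy the decay at $\infty$. Its $g_j^{-2}$ coefficient is affine in $\zeta$, chosen to kill the $z^{-2}$ term of $Q_j(\zeta,z)-1/(\zeta-z)$. Koebe's $\tfrac14$ theorem and Cauchy estimates then give the uniform bounds, as you indicate.
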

We use the theorem in its empty-interior case. For a proof of the
general statement, see \cite{Mergelyan51}.

From now on, assume that the germ is non-linearizable. Then $K$ is full
and has empty interior by Proposition~\ref{prop:empty}, so the last
clause of the approximation theorem applies.

\begin{proposition}[Continuous averages along denominators]\label{prop:continuous-subsequence}
For every $\varphi\in C(K)$,
\begin{equation}\label{eq:continuous-subsequence}
 \norm{A_{Q_n}\varphi-\varphi(0)}_K\longrightarrow0.
\end{equation}
\end{proposition}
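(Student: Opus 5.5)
The plan is a standard $\varepsilon/3$ density argument, combining Proposition~\ref{prop:poly-averages} with Mergelyan's theorem (Theorem~\ref{input:mergelyan}). Since the germ is non-linearizable, Proposition~\ref{prop:empty} gives $\Int K=\varnothing$, and $K$ is full. So Theorem~\ref{input:mergelyan} in its empty-interior case shows that polynomials are dense in $C(K)$ for the sup norm on $K$.

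Fix $\varphi\in C(K)$ and $\varepsilon>0$, and choose a polynomial $P$ with $\norm{\varphi-P}_K\le\varepsilon$. The key observation is that the averaging operator $A_N$ is a contraction for the sup norm on $K$. Indeed, $f(K)=K$, so every orbit point $f^j(z)$, $z\in K$, lies in $K$, where $|\varphi-P|\le\varepsilon$. Hence
\[
 \norm{A_{Q_n}\varphi-A_{Q_n}P}_K\le\norm{\varphi-P}_K\le\varepsilon
\]
for every $n$. It is important here that $P$ is evaluated only at points of $K$. The Lipschitz constant $L_P$ from Proposition~\ref{prop:poly-averages} concerns orbits in a neighborhood of $K$, but we only use its conclusion on $K$. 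Also $|\varphi(0)-P(0)|\le\varepsilon$, since $0\in K$.

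The triangle inequality then gives
\[
 \norm{A_{Q_n}\varphi-\varphi(0)}_K\le\norm{A_{Q_n}\varphi-A_{Q_n}P}_K+\norm{A_{Q_n}P-P(0)}_K+|P(0)-\varphi(0)|\le2\varepsilon+\norm{A_{Q_n}P-P(0)}_K .
\]
By \eqref{eq:poly-limit} the last term tends to $0$, so the $\limsup$ of the left side is at most $2\varepsilon$. Since $\varepsilon$ is arbitrary, \eqref{eq:continuous-subsequence} follows.

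There is no real obstacle in this step. The substantive inputs, namely the uniform polynomial averages and the empty interior, are already established. The only point that needs care is the invariance $f(K)=K$, which keeps the orbit inside the set where the polynomial approximation holds. The approximating polynomial depends on $\varepsilon$ but not on $n$, which is exactly what the order of limits requires.
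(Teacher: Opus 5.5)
Your proposal is correct and follows essentially the same route as the paper's proof. You use Mergelyan density, which is available because Proposition~\ref{prop:empty} gives empty interior and $K$ is full. You bound $\norm{A_{Q_n}\varphi-A_{Q_n}P}_K$ by invariance of $K$ and control the polynomial term with Proposition~\ref{prop:poly-averages}, which yields $\limsup\le2\varepsilon$.
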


\begin{proof}
Given $\epsilon>0$, choose a polynomial $P$ with
$\norm{\varphi-P}_K<\epsilon$. Invariance of $K$ gives
\[
 \norm{A_{Q_n}\varphi-A_{Q_n}P}_K\le\epsilon,
 \qquad |\varphi(0)-P(0)|\le\epsilon.
\]
By Proposition~\ref{prop:poly-averages},
\[
 \limsup_{n\to\infty}
 \norm{A_{Q_n}\varphi-\varphi(0)}_K\le2\epsilon.
\]
Letting $\epsilon\downarrow0$ proves the claim.
\end{proof}

\begin{proof}[Proof of the invariant-measure assertion in Theorem~\ref{thm:main}]
The probability $\delta_0$ is invariant since $f(0)=0$, so existence is
immediate. Let $\mu\in\Mf(K)$ be arbitrary. For every continuous
$\varphi$, invariance gives
\[
 \int_K A_{Q_n}\varphi\dd\mu=\int_K\varphi\dd\mu.
\]
Using uniform convergence in
Proposition~\ref{prop:continuous-subsequence}, we obtain
\[
 \int_K\varphi\dd\mu=\varphi(0).
\]
This identifies $\mu$ with $\delta_0$. Alternatively, applying the same
argument to $\varphi(z)=|z|^2$ gives
$\int|z|^2\dd\mu=0$, which already forces the probability $\mu$ to be
concentrated at $0$. This proves \eqref{eq:unique}.
\end{proof}

\subsection{From selected denominators to every averaging time}
It remains to pass from $Q_n$ to all averaging times. Uniformity in
the starting point allows us to decompose a long orbit segment into
blocks of one fixed good length.

\begin{lemma}[Uniform blocking]\label{lem:blocking}
Let $T:X\to X$ be a map of a set and let $\varphi$ be bounded. Suppose
that for integers $Q_k\to\infty$ and a constant $b$,
\[
 \sup_{x\in X}\left|\frac1{Q_k}\sum_{j=0}^{Q_k-1}\varphi(T^jx)-b\right|
 \longrightarrow0.
\]
Then the analogous uniform convergence holds as the averaging length
$N$ tends to infinity through all positive integers.
\end{lemma}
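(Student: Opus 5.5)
We argue by the standard blocking argument. Fix $\epsilon>0$ and set $\norm{\varphi}=\sup_X|\varphi|$. By hypothesis we may choose one index $k$ such that, with $Q=Q_k$,
\[
 \sup_{x\in X}\Bigl|\frac1Q\sum_{j=0}^{Q-1}\varphi(T^jx)-b\Bigr|\le\epsilon .
\]
This $Q$ is now fixed once and for all. Only this single good length is used; no relation between different $Q_k$ is needed. We will also use that $|b|\le\norm{\varphi}$, which follows from the hypothesis by letting $k\to\infty$ at any fixed point.

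For an arbitrary $N\ge Q$, write $N=mQ+r$ with $m\ge1$ and $0\le r<Q$. Split the orbit segment $x,Tx,\ldots,T^{N-1}x$ into the $m$ consecutive blocks starting at the points $T^{iQ}x$, $0\le i<m$, each of length $Q$, followed by a tail of length $r$. The key point is that the hypothesis is \emph{uniform} in the starting point. Applied at $y=T^{iQ}x\in X$, it shows that each block sum differs from $Qb$ by at most $Q\epsilon$. The tail sum differs from $rb$ by at most $2r\norm{\varphi}$. Therefore
\[
 \Bigl|\sum_{j=0}^{N-1}\varphi(T^jx)-Nb\Bigr|\le mQ\epsilon+2Q\norm{\varphi}.
\]
Dividing by $N$ and using $mQ\le N$ gives
\[
 \sup_{x\in X}\bigl|A_N\varphi(x)-b\bigr|\le\epsilon+\frac{2Q\norm{\varphi}}{N}.
\]
Hence the $\limsup$ as $N\to\infty$ is at most $\epsilon$. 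Since $\epsilon$ was arbitrary, the averages converge uniformly to $b$ along all positive integers $N$.

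This argument has no real obstacle. The only point needing care is that the estimate must hold at every block starting point $T^{iQ}x$, which requires uniformity over all of $X$, and this is exactly what the hypothesis provides. In the application, $X=K$, $T=f|_K$, $b=\varphi(0)$, and the input is Proposition~\ref{prop:continuous-subsequence}. That proposition is uniform over $K$, and $K$ is invariant, so every $T^{iQ}x$ stays in $K$. The conclusion then yields \eqref{eq:uniform-main} directly.
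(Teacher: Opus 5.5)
Your proposal is correct and follows the paper's proof: you fix one good length $Q=Q_k$, write $N=mQ+r$, apply the uniform bound to each complete block starting at $T^{iQ}x$, and let the remainder contribute $O(Q/N)$. The only difference is that you bound $|b|$ by $\norm{\varphi}_\infty$, whereas the paper keeps the term $\frac{Q}{N}(\norm{\varphi}_\infty+|b|)$; this does not affect the argument.
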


\begin{proof}
Fix $\epsilon>0$ and choose a single $Q=Q_k$ such that every length-$Q$
average differs from $b$ by at most $\epsilon$. Write $N=mQ+r$, with
$0\le r<Q$. Splitting the sum into $m$ complete blocks and one remainder
shows
\[
 \sup_{x\in X}\left|\frac1N\sum_{j=0}^{N-1}\varphi(T^jx)-b\right|
 \le\epsilon+\frac{Q}{N}(\norm{\varphi}_\infty+|b|).
\]
Every complete block starts at another point of $X$, so the same uniform
bound applies to it. First let $N\to\infty$ with $Q$ fixed and then
let $\epsilon\downarrow0$.
\end{proof}

Apply this lemma with $X=K$, $T=f$, and $b=\varphi(0)$ to prove
\eqref{eq:uniform-main}. A good denominator is fixed before $N$ tends to infinity; no bound
on $Q_{n+1}/Q_n$ is required.

\subsection{Uniformly sublinear excursions}
For $\varphi(z)=|z|^2$, \eqref{eq:uniform-main} becomes
\begin{equation}\label{eq:second-moment}
 \lim_{N\to\infty}\sup_{z\in K}\frac1N
       \sum_{j=0}^{N-1}|f^j(z)|^2=0.
\end{equation}
For each $r>0$,
\[
 r^2\#\{0\le j<N:|f^j(z)|\ge r\}
 \le\sum_{j=0}^{N-1}|f^j(z)|^2.
\]
Take the supremum and use \eqref{eq:second-moment} to obtain
\eqref{eq:excursion-main}. This completes the proof of
Theorem~\ref{thm:main}.

The conclusion concerns the frequency of excursions, not pointwise
convergence to $0$. Excursions of density zero are compatible with the
uniform recurrence proved above.

\section{Quantitative estimates}\label{sec:audit}

For a polynomial $P$, combining \eqref{eq:poly-bound},
\eqref{eq:omega}, and \eqref{eq:little-bloch-bound} gives
\begin{equation}\label{eq:explicit-error}
 \norm{A_{Q_n}P-P(0)}_K
 \le2L_PC\sqrt{\frac8\pi\mathcal E(2e^Ct_n)},
 \qquad t_n=2cBM_n.
\end{equation}
Here $\mathcal E(a)$ is the area of the exterior collar defined in
\eqref{eq:finite-area}. Since $M_n\to0$ and $\mathcal E(a)\to0$ as
$a\downarrow0$, the right-hand side tends to zero.

For any continuous $\varphi$ and any polynomial $P$,
\begin{equation}\label{eq:approx-error}
 \norm{A_{Q_n}\varphi-\varphi(0)}_K
 \le2\norm{\varphi-P}_K+
       2L_PC\sqrt{\frac8\pi\mathcal E(2e^Ct_n)}.
\end{equation}
The polynomial is fixed before taking $n\to\infty$. These estimates
give a modulus of convergence for the particular compactum, not a
universal rate in $Q_n$.

\begin{remark}[Choice of constants]
Fix $f$, $K$, and the analytic strip for the external map. This fixes
$V,B,\Lambda,\tau$. Choose $c$ as in \eqref{eq:small-c}; the constants
$D,L,C$ are then fixed. Only afterwards is $n$ taken large. In
particular, $e^Ct_n\to0$, however large the fixed constant $C$ is.
No Diophantine estimate or regularity estimate for the conjugacy $H$
is required.
\end{remark}

\section{Further results}\label{sec:further}

The following results will be proved in a forthcoming article.

\medskip

\textbf{Boundaries of Siegel disks.}

\medskip

The results extend to a linearizable hedgehog in the following sense: The
dynamics restricted to the boundary of the Siegel disk of a linearizable hedgehog
is uniquely ergodic. The
invariant measure is the harmonic measure in the boundary from the fixed point.

\medskip
\textbf{Annular hedgehogs.}

\medskip

An annular hedgehog is associated with a non-linearizable analytic circle diffeomorphism
with irrational rotation number. The dynamics on an annular hedgehog is uniquely
ergodic, the invariant measure being the unique invariant measure on the circle.

\medskip
\textbf{Essential compacta.}

\medskip

We consider an essential compact connected set $K\subset\C^*$ such that
$\C^*\setminus K$ has exactly two connected components, $U_0$ and $U_\infty$,
with $U_0\cup\{0\}$ and $U_\infty\cup\{\infty\}$ simply connected.
We assume that the dynamics $f$ and its inverse are defined in an essential annular
neighborhood of $K$. Then the prime-end rotation numbers of the two complementary
components agree, with compatible orientation conventions, and are supposed to be irrational. The dynamics on $K$ is uniquely
ergodic if and only if $K$ does not contain a linearizable Herman ring.

\bigskip

\textbf{Note to Button Pushers and AI companies.}

\medskip

Please, don't spam proposing computer generated proofs of these results that
you don't understand.
I already  have generated these proofs but my goal is to write beautiful
human proofs. Thank you for your attention.

\end{document}